\newtheorem{lem}{Lemma}[section]
\newtheorem{thm}{Theorem}[section]
\numberwithin{equation}{section}
\begin{document}
\title{Convergence rates for a  supercritical branching process in a random environment }
\author{Chunmao HUANG$^{a,b}$, Quansheng LIU$^{a,c,}$\footnote{Corresponding author at:  LMBA, UMR 6205, Universit\'e de Bretagne-Sud, Campus de Tohannic,
BP 573, 56017 Vannes, France.  \newline \indent \ \ Email addresses: sasamao02@gmail.com (C. Huang), quansheng.liu@univ-ubs.fr (Q. Liu).}
\\
\small{\emph{$^{a}$LMBA, Universit\'e de Bretagne-Sud, Campus de Tohannic,
BP 573, 56017 Vannes, France}}\\
\small{\emph{ $^{b}$ CMAP, Ecole Polytechnique,  Route de Saclay
                     91128 Palaiseau, France}}\\
\small{\emph{ $^{c}$
 Changsha University of Science and Technology}}, \\
            \small{\emph{ School of Mathematics and Computing Science, Changsha, 410076, P.R. China}}}
\date{February 16, 2013}
\maketitle

\begin{abstract}
Let $(Z_n)$ be a supercritical branching process in a stationary and ergodic random environment
$\xi$.
We study the convergence rates of the martingale $W_n = Z_n/ \mathbb{E}[Z_n| \xi]$
to its limit
 $W$. The following results about the convergence almost sure (a.s.), in law or in
probability, are shown.    (1) Under a moment condition of order $p\in (1,2)$,
$W-W_n = o (e^{-na})$ a.s. for some
$a>0$ that we find explicitly; assuming only   $\mathbb{E} W_1 \log  W_1^{\alpha
+1} < \infty$
for some $\alpha >0$, we have $W-W_n = o (n^{-\alpha})$ a.s.; similar conclusions
hold for a branching process in a varying environment.   (2) Under a second moment
condition, there are norming constants $a_n(\xi)$ (that we calculate explicitly)
such that $a_n(\xi) (W-W_n)$ converges in law to a non-degenerate distribution.
(3) For a branching process in a finite state random environment,  if $W_1$ has a
finite exponential moment, then so does $W$,  and the decay rate of
$\mathbb{P}(|W-W_n| > \epsilon)$ is supergeometric.
\\*

\emph{AMS 2010 subject classification}: 60K37, 60J80.

\emph{Key words}: branching process, varying environment, random
environment, martingale, convergence rates, convergence in law, exponential moment.

\end{abstract}

\section{Introduction and main results}
The study  of branching processes is interesting  due to a wide
range of applications: see for example the books by Harris (1963, \cite{Ha}) and
Athreya \& Ney (1972, \cite{a}). In a Galton-Watson process,
particles behave independently, each gives birth to a random number of
particles of the next generation with a fixed distribution.
 A branching process in a random environment is a natural and
important extension of the Galton -Watson process, where the offspring distributions vary according to a random environment indexed by the time. This model was first introduced by  Smith \& Wilkinson (1969, \cite{smith}) in the independent and identically distributed environment case, and  by Athreya \& Karlin (1971, \cite{a1}) in the  stationary and ergodic environment case. Since then many authors have contributed to the subject. For recent results, see for example
Afanasyev, Geiger, Kersting \& Vatutin (2005, \cite{af1}),  Kozlov (2006, \cite{kozlov}), Bansaye \& Berestycki (2009, \cite{ba}),
 and B\"oinghoff,  Dyakonova, Kersting \& Vatutin (2010, \cite{bd}),  among others.  Here,  for a supercritical branching process $(Z_n)$  in a stationary and ergodic random environment $\xi$,
   we study the convergence rates of the  martingale $W_n = Z_n/ \mathbb{E}[Z_n| \xi]$
to its limit $W$, by considering the almost sure (a.s.) convergence (find $a>0$ such that $W-W_n = o (e^{-na})$   or $W-W_n = o (n^{-a})$ a.s.),  the convergence in law (find norming constants $a_n(\xi)$
such that $a_n(\xi) (W-W_n)$ converges in law to a non-degenerate distribution),
and the convergence in probability (give an estimation of the deviation probability $\mathbb{P}(|W-W_n| > \epsilon)$). These results extend the corresponding ones of Asmussen (1976, \cite{soren}), Heyde (1970, \cite{heyde70}), and Athreya (1994, \cite{ath}) on the Galton-Watson process.\\*

A branching process in a stationary and ergodic random environment can be described as follows. Let $ \xi=(\xi_0,\xi_1,\xi_2,\cdots)$ be a stationary and ergodic
sequence. Suppose that each realization of $\xi_n$ corresponds to a probability distribution on $\mathbb{N}_0=\{0,1,2,\cdots\}$ denoted by
$p(\xi_n)=\left\{p_i(\xi_n):  i\in\mathbb{N}_0\right\}$, where
$$
 \text{$p_i(\xi_n)\geq0$, \quad$\sum_ip_i(\xi_n)=1$\quad and \quad$\sum_iip_i(\xi_n)\in(0,\infty)$. }
$$
A branching process $(Z_n)$ in the random environment $\xi$ is a class of branching processes in a varying environment indexed by $\xi$. By definition,
$$Z_0=1,\qquad Z_{n+1}=\sum_{i=1}^{Z_n}X_{n,i}\quad (n\geq0),$$
where given the environment $\xi$, $X_{n,i}$ ($n\geq0, i\geq1$) is a sequence of (conditionally) independent random variables;
each $X_{n,i}$ has distribution
$p(\xi_n)$.

Given $\xi$, the conditional probability will be denoted by
$\mathbb{P}_{\xi}$ and the corresponding expectation by $\mathbb{E}_{\xi}$. The total
probability will be denoted by $\mathbb{P}$ and the corresponding expectation
by $\mathbb{E}$. As usual, $\mathbb{P}_\xi$ is called \emph{quenched law}, and $\mathbb{P}$ \emph{annealed law}.

Let $\mathcal {F}_0=\mathcal
{F}(\xi)=\sigma(\xi_0,\xi_1,\xi_2,\cdots)$ and $\mathcal
{F}_n=\mathcal
{F}_n(\xi)=\sigma(\xi_0,\xi_1,\xi_2,\cdots,X_{k,i},\;0\leq k\leq
n-1,\;i=1,2,\cdots)$ be the $\sigma$-field generated by the random
variables $X_{k,i},\;0\leq k\leq n-1,\;i=1,2,\cdots$, so that
 $Z_n$ are $\mathcal {F}_n$-measurable.
For $n\in \mathbb N_0$ and $p\geq1$, set
$$m_n(p)= \sum_{i=1}^\infty i^pp_i(\xi_n)\qquad\text{and}\qquad
m_n=m_n(1).$$
Then $m_n(p)$ is the $p$-th moment of the offspring distribution in generation $n$ and $m_n$ is the mean offspring number of an individual in generation $n$, given the environment $\xi$.
Let
$$\Pi_0=1\qquad \text{and}\qquad
\Pi_n=\prod_{i=0}^{n-1}m_i \quad\text{for $n\geq1$}.$$
Then $\Pi_n=\mathbb{E}_{\xi}Z_n$. It is well known that the normalized population size
$$W_n=\frac{Z_n}{\Pi_n}$$
is a nonnegative martingale under $\mathbb{P}_\xi$  for every $\xi$ with
respect to the filtration $\mathcal{F}_n$, and
$$\lim_{n\rightarrow\infty}W_n=W$$
exists a.s. with $\mathbb{E}W\leq1$. In the present paper, we consider the \emph{supercritical} case where
$\mathbb{E}\log m_{0}\in (0,\infty )$,   and we are interested in the convergence rates of $W-W_n$.
\\*

We first consider the a.s. convergence rate.
For a Galton-Watson process,  Asmussen (1976,
\cite{soren}) showed that $W-W_n=o(m^{-n/q})$ a.s. and $\mathbb{P}(W>0)>0$, if and only if
$\mathbb{E}Z_1^p<\infty$ , where $1<p<2$, $1/p+1/q=1$ and $m=\mathbb{E}Z_1\in(1,\infty)$,
and that
$W-W_n=o(n^{-\alpha})$ a.s. if
$\mathbb{E}Z_1(\log^+Z_1)^{1+\alpha}<\infty$ for some $\alpha>0$.  The following two theorems show that similar results hold for a branching process in a random environment.

\begin{thm}\label{CRT2.1.1}
If $\mathbb{E}\log
\mathbb{E}_{\xi}(\frac{Z_1}{m_0})^{p}<\infty$ for some
$p\in(1,2)$, then for any $\varepsilon>0$,
\begin{equation*}
W-W_n=o(m^{- \frac{n}{q+\varepsilon}})\qquad a.s.,
\end{equation*}
where $m=\exp(\mathbb{E}\log m_0)>1$ and $1/p+1/q=1$.
\end{thm}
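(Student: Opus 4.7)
The plan is to combine a conditional $L^p$ estimate for the martingale increments of $W_n$ with ergodic control of $P_n$ and of $C_n(\xi):=\mathbb{E}_\xi|\bar{X}_n-1|^p$, and then to conclude via a quenched Borel--Cantelli argument. Set $\gamma:=\mathbb{E}\log m_0\in(0,\infty)$; by Birkhoff, $n^{-1}\log P_n\to\gamma$ almost surely, so the growth rate $m$ appearing in the statement is $e^{\gamma}$, and recall $1/q=(p-1)/p$.

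The first ingredient is a quenched $L^p$ bound on the increments. Given $\mathcal{F}_n$, the variables $\bar{X}_{n,i}-1$, $1\le i\le Z_n$, are i.i.d., centred, with conditional $p$-th absolute moment $C_n(\xi)$, so applying the von Bahr--Esseen inequality to \eqref{CRE1.1.1} and integrating yields $\mathbb{E}_\xi|W_{n+1}-W_n|^p\le 2\,C_n(\xi)/P_n^{\,p-1}$. A Burkholder--Davis--Gundy bound for the $\mathbb{P}_\xi$-martingale $(W_{n+k}-W_n)_{k\ge 0}$, combined with subadditivity of $x\mapsto x^{p/2}$ for $p\in(1,2]$ and Fatou's lemma, then gives
\begin{equation*}
\mathbb{E}_\xi|W-W_n|^p\le C_p\sum_{k\ge n}C_k(\xi)/P_k^{\,p-1}.
\end{equation*}
Two ergodic inputs now make this sum exponentially small. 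Birkhoff yields $P_n\ge e^{n(\gamma-\eta)}$ a.s.\ eventually for any $\eta>0$; and from $C_n(\xi)\le 2^{p-1}(\mathbb{E}_\xi\bar{X}_n^p+1)$ together with the hypothesis, the sequence $\log^{+}C_n(\xi)$ is stationary and integrable, so Borel--Cantelli forces $n^{-1}\log^{+}C_n(\xi)\to 0$ a.s., whence $C_n(\xi)\le e^{n\eta}$ eventually. Summing the resulting geometric series, a.s.\ for all large $n$,
\begin{equation*}
\mathbb{E}_\xi|W-W_n|^p\le K(\xi)\,\exp\bigl\{-n[(p-1)\gamma-p\eta]\bigr\}.
\end{equation*}

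A conditional Markov inequality now gives, for any $\delta>0$,
\begin{equation*}
\mathbb{P}_\xi\!\left(|W-W_n|>\delta\, m^{-n/(q+\varepsilon)}\right)\le \delta^{-p}K(\xi)\exp\!\Bigl\{-n\bigl[(p-1)\gamma-p\eta-\tfrac{p\gamma}{q+\varepsilon}\bigr]\Bigr\}.
\end{equation*}
Using $(p-1)q=p$, the bracket simplifies to $\gamma(p-1)\varepsilon/(q+\varepsilon)-p\eta$, which is strictly positive once $\eta$ is chosen small enough (depending on $\varepsilon$). Summability in $n$ and the conditional Borel--Cantelli lemma then give $|W-W_n|\le\delta\,m^{-n/(q+\varepsilon)}$ eventually $\mathbb{P}_\xi$-a.s., hence $\mathbb{P}$-a.s.; letting $\delta\downarrow 0$ along a countable sequence yields the theorem.

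The main obstacle is that in a general stationary ergodic environment $C_n(\xi)$ and $P_n$ are not independent, so the clean decomposition $\mathbb{E}[C_n(\xi)/P_n^{p-1}]=\mathbb{E}[C_n]\,\mathbb{E}[P_n^{-(p-1)}]$ is unavailable. My remedy is to stay under $\mathbb{P}_\xi$ throughout and rely only on the a.s.\ ergodic growth of $P_n$ together with the a.s.\ subexponential control of $C_n(\xi)$; the cost is an arbitrarily small loss in the exponent, which is exactly what the parameter $\varepsilon$ in the statement absorbs.
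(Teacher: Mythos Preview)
Your argument is correct, and it takes a genuinely different route from the paper. The paper first proves a deterministic result (Theorem~\ref{CRT1.1}) for branching processes in a varying environment by a Kolmogorov three-series argument: it truncates $\bar X_{n,i}$ at level $P_n^{1/p}$, shows convergence of $\sum_n P_n^{1/q}(W_{n+1}-W_n)$ via three auxiliary series, and deduces $W-W_n=o(P_n^{-1/q})$ through Abel's lemma (Lemma~\ref{CRL1.2.1}). Theorem~\ref{CRT2.1.1} then follows by specializing to the random environment and verifying the moment hypothesis of Theorem~\ref{CRT1.1}(ii) via the ergodic theorem and Lemma~\ref{CRL2.2.1}.

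You bypass the truncation entirely: von~Bahr--Esseen plus Burkholder and subadditivity of $x\mapsto x^{p/2}$ give a direct quenched bound $\mathbb{E}_\xi|W-W_n|^p\le C_p\sum_{k\ge n}C_k(\xi)/P_k^{p-1}$, and then Markov plus Borel--Cantelli close the argument. Both proofs rely on the same ergodic inputs (Birkhoff for $P_n$, and the observation that $n^{-1}\log^+ C_n(\xi)\to 0$ a.s.\ for a stationary sequence with integrable $\log^+$), and both spend the same $\varepsilon$ to absorb the subexponential factor. Your approach is shorter and more in the spirit of modern martingale inequalities; the paper's approach has the advantage of yielding Theorem~\ref{CRT1.1} for varying environments as an independent result, which it later reuses for Theorem~\ref{CRT2.1.2}.
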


\noindent \textbf{Remark.} As $\mathbb{E}\log m_0\in(0,\infty)$, it can be seen that   $\mathbb{E}\log
\mathbb{E}_{\xi}(\frac{Z_1}{m_0})^{p}<\infty$ if
$\mathbb{E}\log^+\mathbb{E}_{\xi}Z_1^{p}<\infty$.
\\*

\begin{thm}\label{CRT2.1.2} 
Assume that
$\mathbb{E}\frac{Z_1}{m_0}(\log^+\frac{Z_1}{m_0})^{1+\alpha}<\infty$ for some $\alpha>0$. Then
\begin{equation*}
W-W_n=o(n^{-\alpha})\qquad a.s.;
\end{equation*}
moreover,  the series $\sum_n (W -W_n)$ converges $a.s.$ if $\alpha\geq1$.
\end{thm}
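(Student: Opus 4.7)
The plan is to show that $\sum_{k\ge 1}k^{\alpha}(W_{k+1}-W_k)$ converges almost surely; the rate $n^{\alpha}(W-W_n)\to 0$ a.s.\ then follows by a summation-by-parts (Abel) argument. Indeed, setting $S_n:=\sum_{k\ge n}k^{\alpha}(W_{k+1}-W_k)$ and using $\sum_{k\ge n}(k^{-\alpha}-(k+1)^{-\alpha})=n^{-\alpha}$, summation by parts yields
\[
n^{\alpha}(W-W_n)=S_n-n^{\alpha}\sum_{k\ge n}(k^{-\alpha}-(k+1)^{-\alpha})\,S_{k+1},
\]
and both terms tend to $0$ once $S_n\to 0$. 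To prove the weighted convergence, I would fix $c\in(0,\mathbb{E}\log m_0)$, set $\phi(n)=e^{cn}$, and truncate $\bar X_{n,i}=U_{n,i}+V_{n,i}$ with $U_{n,i}=\bar X_{n,i}\mathbf{1}_{\bar X_{n,i}\le\phi(n)}$. After centering each piece by its $\mathbb{P}_{\xi}$-conditional mean, identity \eqref{CRE1.1.1} splits as $W_{n+1}-W_n=U_n+V_n$, where both $U_n$ and $V_n$ are $\mathcal{F}_{n+1}$-martingale differences.

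For the tail piece, the triangle inequality combined with the identity $\mathbb{E}_{\xi}W_n=1$ (from $\mathbb{E}_{\xi}Z_n=P_n$) gives
\[
\mathbb{E}|V_n|\le 2\,\mathbb{E}[\bar X_{n,1}\mathbf{1}_{\bar X_{n,1}>\phi(n)}]=2\,\mathbb{E}[Y\mathbf{1}_{Y>\phi(n)}],
\]
where $Y=Z_1/m_0$ and the last equality uses the stationarity of $\xi$. A Fubini computation then yields
\[
\sum_{n\ge 1}n^{\alpha}\mathbb{E}[Y\mathbf{1}_{Y>e^{cn}}]=\mathbb{E}\Bigl[Y\sum_{n<c^{-1}\log^{+}Y}n^{\alpha}\Bigr]\le C_{\alpha,c}\,\mathbb{E}[Y(\log^{+}Y)^{1+\alpha}]<\infty,
\]
so $\sum_k k^{\alpha}|V_k|<\infty$ almost surely.

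For the truncated piece, the bound $U_{n,1}^2\le\phi(n)\bar X_{n,1}$ and the $\mathbb{P}_{\xi}$-conditional independence of the $U_{n,i}$ give
\[
\mathbb{E}_{\mathcal{F}_n}[U_n^2]\le\frac{Z_n}{P_n^2}\mathbb{E}_{\xi}U_{n,1}^2\le\frac{W_n\phi(n)}{P_n}.
\]
By Birkhoff's ergodic theorem, $n^{-1}\log P_n\to\mathbb{E}\log m_0>c$ almost surely, so $\phi(n)/P_n$ decays exponentially a.s.; since the convergent sequence $(W_n)$ is almost surely bounded, this yields $\sum_n n^{2\alpha}\mathbb{E}_{\mathcal{F}_n}[U_n^2]<\infty$ a.s. As $(k^{\alpha}U_k)$ is a martingale difference sequence with a.s.\ finite conditional quadratic variation, $\sum_k k^{\alpha}U_k$ converges a.s.\ by the standard criterion (localize at the first time the cumulative conditional variance exceeds $K$, apply $L^2$-martingale convergence to the stopped martingale, and let $K\to\infty$). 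Combining this with the tail estimate shows that $\sum_k k^{\alpha}(W_{k+1}-W_k)$ converges a.s., yielding the first assertion via the Abel argument above.

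For the second assertion, the identity
\[
\sum_{n=1}^{N}(W-W_n)=\sum_{k=1}^{N}k(W_{k+1}-W_k)+N(W-W_{N+1})
\]
together with $N(W-W_{N+1})=N^{1-\alpha}\cdot N^{\alpha}(W-W_{N+1})\to 0$ when $\alpha\ge 1$ (by the first assertion) reduces matters to convergence of $\sum k(W_{k+1}-W_k)$; this follows from the same truncation-plus-martingale argument with $k$ in place of $k^{\alpha}$, noting that $\alpha\ge 1$ entails $\mathbb{E}[Y(\log^{+}Y)^{2}]<\infty$. The main technical obstacle lies in the conditional $L^2$ control on $U_n$: the bound $\mathbb{E}_{\mathcal{F}_n}[U_n^2]\le W_n\phi(n)/P_n$ must be exploited \emph{quenched}, using the a.s.\ exponential decay of $1/P_n$ supplied by the ergodic theorem, because the annealed bound $\mathbb{E}[1/P_n]$ need not decay in a useful way under mere stationarity-ergodicity of $\xi$; this is what forces the exponential truncation level $\phi(n)=e^{cn}$ with rate $c$ strictly less than $\mathbb{E}\log m_0$.
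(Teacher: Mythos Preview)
Your proof is correct and follows the same overall scheme as the paper---reduce to the a.s.\ convergence of $\sum_{n} n^{\alpha}(W_{n+1}-W_n)$ via truncation, then invoke Abel summation (the paper's Lemma~\ref{CRL1.2.2})---but the execution differs in a few instructive ways. The paper truncates at the environment-dependent level $\{\bar X_{n,i}(\log\bar X_{n,i})^{\kappa}\le P_n/n^{\alpha}\}$ and splits into \emph{three} series $\sum(S_n-S_n')$, $\sum(S_n'-\mathbb{E}_{\xi}(S_n'\mid\mathcal F_n))$, $\sum\mathbb{E}_{\xi}(S_n'\mid\mathcal F_n)$; the extra factor $(\log\bar X_n)^{\kappa}$ is precisely what lets the paper push the $L^2$ bound on the middle series down to the available moment $\mathbb{E}\bar X_0(\log^+\bar X_0)^{1+\alpha}$ after taking the \emph{annealed} expectation. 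You instead choose a deterministic threshold $\phi(n)=e^{cn}$ with $c<\mathbb{E}\log m_0$ and center both pieces, producing only \emph{two} martingale-difference series. This buys you a cleaner tail estimate (a straight Fubini computation using stationarity alone) and avoids the $(\log\bar X)^{\kappa}$ device entirely; the price is that your $L^2$ control on the truncated piece, $\mathbb{E}_{\mathcal F_n}[U_n^2]\le W_n\phi(n)/P_n$, is only useful \emph{quenched}---you cannot take expectations because $\mathbb{E}[1/P_n]$ need not decay---so you must invoke the ergodic theorem for the a.s.\ exponential decay of $\phi(n)/P_n$ together with the conditional-variance martingale criterion (with the localization you sketch). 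Both routes are valid; yours is arguably tidier here, while the paper's truncation mirrors more closely the varying-environment argument in Theorem~\ref{CRT1.2}. Your handling of the second assertion (rerun the argument with weight $k$ using $\mathbb{E}[Y(\log^+Y)^2]<\infty$) is also fine and matches Lemma~\ref{CRL1.2.1}(ii).
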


We shall see after the proof of Theorem \ref{CRT2.1.2} that the condition
$\mathbb{E}\frac{Z_1}{m_0}(\log^+\frac{Z_1}{m_0})^{1+\alpha}<\infty$
 can be replaced by $\mathbb{E}\frac{Z_1}{m_0}$$(\log^+ {Z_1} )^{1+\alpha}$$<\infty$.\\

We next show that under a second moment condition, with an appropriate normalization, $W-W_n$ converges in law to a non-trivial distribution. Recall that for a Galton-Watson process, Heyde (1970, \cite{heyde70}) proved that if $m=\mathbb{E}Z_1\in(1,\infty)$ and $\sigma^2=\mbox{Var}{Z_1}\in(0,\infty)$, then $m^{n/2}(W-W_n)$ converges in law to a non-degenerate distribution. We shall prove that  similar results hold for a branching process in a random environment.

 Let
\begin{equation}
\delta^2_{\infty}(\xi)=\sum_{n=0}^\infty\frac{1}{\Pi_n}\left(\frac{m_n(2)}{m_n^2}-1\right).
\end{equation}
It can be easily checked that $\delta^2_\infty(\xi)>0$ a.s. if and only if $p_i(\xi_0)<1$ a.s. for all $i\in \mathbb N_0$, which means that  the  offspring distributions are non-degenerate. By a result in Fearn (1972, \cite{fe}) or Jagers (1974, \cite{jajer}),
 $\delta^2_{\infty}(\xi)$ is the variance of $W$ under $\mathbb{P_\xi}$ if the series converges.
  Indeed, by the orthogonality of martingales, it is not hard to see that
$$\mathbb{E}_\xi W_n^2=\sum_{k=0}^{n-1}\mathbb{E}_\xi(W_{k+1}-W_k)^2+1=\sum_{k=0}^{n-1}\frac{1}{\Pi_k}\left(\frac{m_k(2)}{m_k^2}-1\right)+1.$$
Therefore the martingale $\{W_n\}$ is bounded in $L^2$ under $\mathbb{P}_\xi$  if and only if the series converges; when it converges, $\delta^2_{\infty}(\xi)$ is the variance of $W$ under $\mathbb{P}_\xi$.
In Lemma \ref{CRL2.2.1} we shall see that  the series $\sum_{n=0}^\infty\frac{1}{\Pi_n}\left(\frac{m_n(2)}{m_n^2}-1\right)$ converges a.s. if $\mathbb{E}\log^+\mathbb{E}_\xi\left(\frac{Z_1}{m_0}-1\right)^2<\infty$, or equivalently, $\mathbb{E}\log\mathbb{E}_\xi\left(\frac{Z_1}{m_0}\right)^2<\infty$.

As usual, we write $T^n\xi=(\xi_n, \xi_{n+1}, \cdots)$ if $\xi=(\xi_0, \xi_1, \cdots)$ and $n\in \mathbb N_0$. We have the following theorem.

\begin{thm}\label{CRASTP}
Assume that  $p_i(\xi_0)<1$ a.s. for all $i\in\mathbb{N}_0$. For $x\in \mathbb R$,
let $\Phi_1(x)=\mathbb{P}_\xi(G\sqrt{W}\leq x)$ and $\Phi_2(x)=\mathbb{P}(G\sqrt{W}\leq x)=\mathbb{E}\Phi_1(x)$, where $G$ is a gaussian random variable with distribution $\mathcal{N}(0,1)$, independent of $W$ under $\mathbb{P}_\xi$. If $\mathbb{E}\log\mathbb{E}_\xi\left(\frac{Z_1}{m_0}\right)^2<\infty$, then
\begin{equation}\label{CRASEP1}
\sup_{x\in\mathbb{R}}\left|\mathbb{P}_\xi\left(\frac{\sqrt{\Pi_n}(W-W_n)}{\delta_\infty(T^n\xi)}\leq x\right)-\Phi_1(x)\right|\rightarrow0\quad\text{in $L^1,$}
\end{equation}
and
\begin{equation}\label{CRASEP2}
\sup_{x\in\mathbb{R}}\left|\mathbb{P}\left(\frac{\sqrt{\Pi_n}(W-W_n)}{\delta_\infty(T^n\xi)}\leq x\right)-\Phi_2(x)\right|\rightarrow0.
\end{equation}
\end{thm}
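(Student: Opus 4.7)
The plan is to exploit the branching decomposition at generation $n$: by the branching property,
\begin{equation*}
W = \frac{1}{P_n}\sum_{i=1}^{Z_n} W^{(n,i)},
\end{equation*}
where, given $\mathcal F_n$, the variables $W^{(n,i)}$ ($i=1,\ldots,Z_n$) are i.i.d.\ copies of $W$ under $\mathbb P_{T^n\xi}$, of mean $1$ and variance $\delta_\infty^2(T^n\xi)$ (finite a.s.\ by Lemma~\ref{CRL2.2.1} under the $L^2$ hypothesis). Consequently
\begin{equation*}
U_n := \frac{\sqrt{P_n}\,(W-W_n)}{\delta_\infty(T^n\xi)} = \sqrt{W_n}\,V_n,\qquad V_n := \frac{1}{\sqrt{Z_n}\,\delta_\infty(T^n\xi)}\sum_{i=1}^{Z_n}(W^{(n,i)}-1).
\end{equation*}
The strategy is to establish a conditional CLT for $V_n$ given $\mathcal F_n$ via characteristic functions; together with the a.s.\ convergence $W_n\to W$, this identifies the limit of $U_n=\sqrt{W_n}\,V_n$ as $\sqrt W\,G$ with $G\sim\mathcal N(0,1)$ independent of $W$.

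I would compute the conditional characteristic function of $U_n$ directly using conditional independence:
\begin{equation*}
\mathbb E_\xi[e^{itU_n}\mid\mathcal F_n] = [h_n(t)]^{Z_n},\qquad h_n(t) = \mathbb E_{T^n\xi}\!\left[\exp\!\left(\frac{it(W-1)}{\sqrt{P_n}\,\delta_\infty(T^n\xi)}\right)\right].
\end{equation*}
The bound $|e^{ix}-1-ix+x^2/2|\le\min(x^2,|x|^3/6)$, combined with $\mathbb E_{T^n\xi}(W-1)=0$ and $\mathbb E_{T^n\xi}(W-1)^2=\delta_\infty^2(T^n\xi)$, gives the expansion $h_n(t)=1-t^2/(2P_n)+\tilde r_n(t)$, whence $Z_n\log h_n(t)=-W_n t^2/2+Z_n\tilde r_n(t)+O(W_n t^4/P_n)$ a.s. Provided the key estimate $Z_n\tilde r_n(t)\to 0$ a.s., we deduce $\mathbb E_\xi[e^{itU_n}\mid\mathcal F_n]\to e^{-Wt^2/2}$ a.s., and the right-hand side is precisely $\mathbb E_\xi[e^{itG\sqrt W}\mid W]$. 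Bounded convergence then yields $\mathbb E_\xi[e^{itU_n}]\to\mathbb E_\xi[e^{itG\sqrt W}]$ for each $t$, so L\'evy's continuity theorem gives $U_n\to G\sqrt W$ in law under $\mathbb P_\xi$ for $\mathbb P$-a.e.\ $\xi$. The limit law $\Phi_1$ is continuous on $\mathbb R\setminus\{0\}$ with a possible atom at $0$ of size $\mathbb P_\xi(W=0)$; since $U_n\equiv 0$ eventually on the extinction event $\{W=0\}$, pointwise convergence of distribution functions holds also at $x=0$, so the standard extension of P\'olya's theorem to distribution functions whose limit has finitely many jumps yields $\sup_x|\mathbb P_\xi(U_n\le x)-\Phi_1(x)|\to 0$ a.s.\ in $\xi$. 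Dominated convergence (the sup is bounded by $1$) upgrades this to (\ref{CRASEP1}), and (\ref{CRASEP2}) follows from
\begin{equation*}
\sup_x\bigl|\mathbb P(U_n\le x)-\Phi_2(x)\bigr|\le\mathbb E\sup_x\bigl|\mathbb P_\xi(U_n\le x)-\Phi_1(x)\bigr|\to 0.
\end{equation*}

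The main obstacle is the a.s.\ control of the Taylor remainder $Z_n\tilde r_n(t)$ along the shifted environment $(T^n\xi)_{n\ge 0}$. A truncation at a level $c_n$ leads to a bound of the form
\begin{equation*}
Z_n|\tilde r_n(t)|\le W_n t^2\,\frac{\mathbb E_{T^n\xi}\bigl[(W-1)^2\mathbf 1_{|W-1|>c_n}\bigr]}{\delta_\infty^2(T^n\xi)}+\frac{W_n|t|^3 c_n}{6\sqrt{P_n}\,\delta_\infty(T^n\xi)},
\end{equation*}
and both terms must vanish a.s., even though the $L^2$ hypothesis only gives Lindeberg's condition for each \emph{fixed} environment, not uniform integrability of $(W(T^n\xi)-1)^2/\delta_\infty^2(T^n\xi)$ as $n$ varies. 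Closing this gap will require combining the ergodicity of $\xi$ (so that $\log\delta_\infty(T^n\xi)=o(n)$ a.s.\ and hence $\sqrt{P_n}\,\delta_\infty(T^n\xi)\to\infty$ on $\{W>0\}$, since $\log P_n$ grows linearly) with a sufficiently slow choice $c_n\uparrow\infty$ so that the cubic term vanishes while the truncated tail is absorbed via the stationarity of $\delta_\infty^2(T^n\xi)$.
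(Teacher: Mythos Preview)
Your decomposition $U_n=\sqrt{W_n}\,V_n$ and the characteristic-function route are natural, but the gap you flag in your last paragraph is genuine and your proposed fix does not close it. Stationarity only tells you that the Lindeberg tail
\[
\frac{\mathbb E_{T^n\xi}\bigl[(W-1)^2\mathbf 1_{|W-1|>c_n}\bigr]}{\delta_\infty^2(T^n\xi)}
\]
has the same \emph{law} as the corresponding quantity at the unshifted environment, so it tends to zero in probability (hence in $L^1$) as $c_n\to\infty$; almost-sure convergence along the orbit $(T^n\xi)$, however, can fail for every deterministic $c_n$. If for each $N$ the set of environments needing truncation level above $N$ has positive probability, ergodicity forces a typical orbit to visit that set infinitely often, so the level required at $T^n\xi$ to push the tail below a fixed $\epsilon$ is a.s.\ unbounded in $n$, with no growth rate extractable from the bare hypothesis $\mathbb E\log\mathbb E_\xi(Z_1/m_0)^2<\infty$. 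Your programme therefore stalls precisely where you say it does, and ``ergodicity plus slow $c_n$'' does not rescue it.

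The paper's proof does not attempt almost-sure control of the shifted Lindeberg condition at all. It exploits stationarity at the level of the $L^1$ norm: for any deterministic $r_n\to\infty$,
\[
\mathbb E\,\sup_{x}\Bigl|\mathbb P_\xi\Bigl(r_n^{-1/2}\sum_{j=1}^{r_n}V_{n,j}\le x\Bigr)-\Phi(x)\Bigr|
=\mathbb E\,\sup_{x}\Bigl|\mathbb P_\xi\Bigl(r_n^{-1/2}\sum_{j=1}^{r_n}V_{0,j}\le x\Bigr)-\Phi(x)\Bigr|,
\]
and on the right the summands have a $\mathbb P_\xi$-law that does not move with $n$. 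The classical CLT (for fixed $\xi$, i.i.d.\ summands, $r_n\to\infty$) then gives a.s.\ convergence of the inner sup to zero, and dominated convergence yields the $L^1$ limit. From this the paper reaches (\ref{CRASEP1}) by a subsequence argument: from any $(n')$ extract $(n'')$ along which the above convergence is a.s., feed it into the representation $\mathbb P_\xi(U_n\le x)=\mathbb E_\xi F_n(Z_n,x/\sqrt{W_n})$ (conditioning your identity $U_n=\sqrt{W_n}\,V_n$ on $\mathcal F_n$), invoke Dini to upgrade to uniform a.s.\ convergence along $(n'')$, and apply dominated convergence once more. Since every subsequence admits such a sub-subsequence, the full $L^1$ convergence follows. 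The missing idea in your attempt is exactly this: trade almost-sure control for $L^1$ control via stationarity, and recover a.s.\ statements only along subsequences.
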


\noindent\textbf{Remark.} Let $\Phi(x)=\frac{1}{\sqrt{2\pi}}\int_{-\infty}^xe^{-t^2/2}dt$ be the distribution function of standard normal distribution $\mathcal{N}(0,1)$. It can be easily seen that
$$\Phi_1(x)=\mathbb{E}_\xi\Phi({x}/{\sqrt{W}})\mathbf{1}_{\{W>0\}}+\mathbb{P}_\xi(W=0)\mathbf{1}_{\{x\geq0\}}.$$

\bigskip

 In fact, (\ref{CRASEP1}) is a quenched version of a convergence result in law: it states that the quenched law of $U_n=\frac{\sqrt{\Pi_n}(W-W_n)}{\delta_\infty(T^n\xi)}$ converges in some sense to a non-degenerate distribution; (\ref{CRASEP2}) is a annealed version of convergence in law: it says that the annealed law of $U_n$ converges to a non-degenerate distribution.

 For the Galton-Watson process, (\ref{CRASEP1}) and (\ref{CRASEP2}) reduce to the result of Heyde (1970, \cite{heyde70}).

 We mention that if we change the norming, then we can obtain central limit theorems. In fact, in extending the theorems of Hedye (1971, \cite{heyde1}) and Heyde and Brown (1971, \cite{heyde2}) on the Galton-Watson process,   Wang, Gao \& Liu (2011, \cite{gao}) have recently shown that (\ref{CRASEP1}) and (\ref{CRASEP2}) hold with $\Pi_n$ replaced by $Z_n$ and $\Phi_i$ ($i=1,2$) replaced by $\Phi(x)=\frac{1}{\sqrt{2\pi}}\int_{-\infty}^xe^{-t^2/2}dt$. Compared to this result, the advantage of Theorem \ref{CRASTP} is that the norming therein  depends only on the environment.
\\*

We finally  consider a special case and show a super-geometric convergence rate. We consider a finite state random environment, namely,  each
$\xi_n$ takes values in a finite set $\{a_1,a_2,\cdots,a_N\}$. Moreover, we assume that $p_0(\xi_0)=0$ and $p_1(\xi_0)<1$ a.s. (which exclude extinction). For this special model, we have a super-geometric convergence rate in probability under an exponential moment condition.

\begin{thm}\label{CRT2.1.9} 
For a branching process in a finite state random environment with $p_0(\xi_0)=0$ and $p_1(\xi_0)<1$ a.s., we have
\begin{itemize}
\item[] (a) $\mathbb{E}e^{\theta W}<\infty$ for some $\theta>0$ if and only if $\mathbb{E}e^{\theta_0W_1}<\infty$ for some $\theta_0>0$ ;
\item[] (b) if $\mathbb{E}e^{\theta_0W_1}<\infty$ for
some constant $\theta_0>0$, then there exist constants $\gamma>0$
and $C>0$ such that
\begin{equation}\label{CRE2.1.3}
\mathbb{P}_\xi (|W-W_n|>\varepsilon)\leq C\exp({-\gamma
\Pi_n^{1/3}\varepsilon^{2/3}})\qquad a.s.,\end{equation}
and
\begin{equation}\label{CRE2.1.4}
\mathbb{P} (|W-W_n|>\varepsilon)\leq C\exp({-\gamma
\underline{m}^{n/3}\varepsilon^{2/3}}),
\end{equation} where
$\underline{m}:=\emph{ess\,inf} \ {m_0}>1$.
\end{itemize}
\end{thm}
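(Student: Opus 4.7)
The proof splits by parts. For part (a), the \emph{only if} direction is immediate: $\mathbb{E}e^{\theta W}<\infty$ makes $(W_n)$ uniformly integrable under $\mathbb{P}_\xi$, so $W_n=\mathbb{E}_\xi[W\mid\mathcal{F}_n]$ and conditional Jensen gives $\mathbb{E}_\xi e^{\theta W_1}\le\mathbb{E}_\xi e^{\theta W}$, which integrates. The \emph{if} direction is the real work. I would iterate the one-step identity $W_{n+1}=P_{n+1}^{-1}\sum_{i=1}^{Z_n}X_{n,i}$: writing $h_s(t)=\sum_k p_k(s)e^{tk}$, the assumption $\mathbb{E}e^{\theta_0 W_1}<\infty$ together with the finite-state hypothesis yields the uniform expansion $h_s(t)\le\exp(m_s t+Ct^2)$ for $|t|\le t_\ast$ and every state $s$. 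Hence
\[
 \mathbb{E}_\xi[e^{\theta W_{n+1}}\mid\mathcal{F}_n]=h_{\xi_n}(\theta/P_{n+1})^{Z_n}\le\exp\!\bigl(\theta W_n+C\theta^2 W_n/(m_nP_{n+1})\bigr)=e^{(1+\delta_n)\theta W_n},
\]
with $\delta_n=C\theta/(m_nP_{n+1})$ summable because $P_n\ge\underline{m}^n$. Iterating gives $\mathbb{E}_\xi e^{\theta W_n}\le M$ uniformly in $n$ and $\xi$ for $\theta>0$ small; Fatou then yields $\mathbb{E}_\xi e^{\theta W}\le M$, which integrates to $\mathbb{E}e^{\theta W}<\infty$.

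For part (b), the branching decomposition at generation $n$ gives
\[
 W-W_n=\frac{1}{P_n}\sum_{i=1}^{Z_n}\bigl(W^{(i)}-1\bigr),
\]
where, conditionally on $\mathcal{F}_n$, the $W^{(i)}$ are i.i.d., centered, and share a uniform exponential moment $\mathbb{E}_{T^n\xi}e^{\eta(W-1)}\le K$ by part (a). I would truncate each $W^{(i)}-1$ at a level $B$: the contribution of the untruncated tails is bounded by $Z_n K e^{-\eta B}$ via a union bound, while a Hoeffding/Bernstein estimate for the truncated sum gives $\exp(-c\varepsilon^2P_n^2/(Z_nB^2))$. To remove the random factor $Z_n=W_nP_n$, I split on $\{W_n\le A\}$, whose complement has $\mathbb{P}_\xi$-probability $\le Ke^{-\eta A}$ (by (a) and conditional Jensen). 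Choosing $A$ and $B$ of order $P_n^{1/3}\varepsilon^{2/3}$ balances the three resulting exponentials and produces the quenched bound (\ref{CRE2.1.3}). The annealed bound (\ref{CRE2.1.4}) follows by taking $\mathbb{E}$ and using the a.s.\ estimate $P_n\ge\underline{m}^n$, valid because $\underline{m}=\mathrm{ess\,inf}\,m_0>1$ in the finite-state supercritical case.

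The principal obstacle is the \emph{if} direction of (a): uniform boundedness of the m.g.f.\ iteration requires the two-sided expansion of $h_s$ near $0$ and summability of the perturbations $\delta_n$, both of which rely essentially on the finite-state hypothesis and $\underline{m}>1$. Once (a) is established, (b) becomes a relatively standard truncation-plus-Chernoff exercise applied to the i.i.d.\ decomposition of $W-W_n$; the seemingly suboptimal exponent $P_n^{1/3}\varepsilon^{2/3}$ comes out of the three-way balance between the Hoeffding, truncation, and $W_n$-tail terms.
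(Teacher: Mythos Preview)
Your treatment of part (a) is sound and close to the paper's. Both iterate the moment generating function; the paper shows directly by induction that $\psi_n(t,\xi):=\mathbb{E}_\xi e^{tW_n}\le e^{t+K_1t^2}$ for all $n$ and all $t\in[0,\theta_1]$, the inductive step resting on the observation that $g_{\xi_0}(t)=\varphi_{\xi_0}\bigl(e^{t/m_0+K_1t^2/m_0^2}\bigr)e^{-t-K_1t^2}$ has $g_{\xi_0}'(0)=0$ and $g_{\xi_0}''(0)<0$, hence $g_{\xi_0}\le1$ near $0$ uniformly over the finitely many states. Your scheme with summable perturbations $\delta_n$ reaches the same conclusion by a slightly different bookkeeping.

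There is, however, a real gap in part (b). With the Hoeffding bound you display, $\exp\bigl(-c\varepsilon^2P_n^2/(Z_nB^2)\bigr)$, and the choice of $A$ and $B$ of order $P_n^{1/3}\varepsilon^{2/3}$, on $\{W_n\le A\}$ the Hoeffding exponent becomes $\varepsilon^2P_n/(AB^2)$, which is of order $1$---a constant that does not decay. Your three-way balance therefore cannot produce the exponent $P_n^{1/3}\varepsilon^{2/3}$ (in fact, optimizing the three terms with the Hoeffding range bound yields only $P_n^{1/4}\varepsilon^{1/2}$). The repair is to exploit the uniformly bounded \emph{variance} of $W(n,j)-1$ rather than the crude range $B$ in the concentration step; truncation is then unnecessary. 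The paper does exactly this: it proves $\mathbb{E}_\xi\exp\bigl(\theta k^{-1/2}\sum_{j\le k}(W(n,j)-1)\bigr)\le C_2$ uniformly in $k$ for small $\theta$, so that Markov's inequality gives $\mathbb{P}_\xi(W-W_n>\varepsilon)\le C_2\,\mathbb{E}_\xi\exp\bigl(-\theta\varepsilon\sqrt{P_n}/\sqrt{W_n}\bigr)$. The exponent $1/3$ then arises not from a three-way balance but from the Laplace-type estimate $\mathbb{E}_\xi e^{-\lambda/\sqrt{W_n}}\le C_1\int_0^\infty e^{-t-\theta_1\lambda^2/t^2}\,dt\le 2C_1e^{-\theta_1^{1/3}\lambda^{2/3}}$ (using the exponential tail of $W_n$ from part (a)), evaluated at $\lambda=\theta\varepsilon\sqrt{P_n}$.
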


For the classical Galton-Watson process, (\ref{CRE2.1.3}) reduces to the result of Athreya (1994, \cite{ath}, Theorem 5). Notice that by Fatou's Lemma and Jensen's inequality, we have
\begin{equation}\label{moment}
\mathbb{E}e^{\theta W}=\sup_n\mathbb{E}e^{\theta W_n}.
\end{equation}
So Theorem \ref{CRT2.1.9}(a) is an extension of a result of Athreya (1994, \cite{ath}, Theorem 4) and Liu (1996, \cite{liu96}, Theorem 2.1) on the classical  Galton-Watson process.
\\*

The rest of this paper is organized as follows. In Section \ref{CR1S1}, we consider a branching process in a varying environment and  show the a.s. convergence rate of $W-W_n$. In Sections \ref{CR1S2}-\ref{CR1S4}, we consider a branching process in a random environment and prove the main results:  we prove Theorems \ref{CRT2.1.1} and \ref{CRT2.1.2}   in Section \ref{CR1S2}, Theorem \ref{CRASTP} in Section \ref{CR1S3} and Theorem \ref{CRT2.1.9} in Section  \ref{CR1S4}.

\section{Branching process in a varying environment}\label{CR1S1}
In this section, we consider a branching process $(Z_n)_{n\geq0}$ in a varying
environment.  By definition,
$$Z_0=1,\qquad Z_{n+1}=\sum_{i=1}^{Z_n}X_{n,i}\quad (n\geq0),$$
where $(X_{n,i})_{i\geq1}$ are independent on some probability space $(\Omega, \mathbb{P})$; each $X_{n,i}$ has distribution
$p(n)=\{p_i(n):i\in\mathbb{N}_0\}$, where
$$
 \text{$p_i(n)\geq0$, \quad$\sum_ip_i(n)=1$ \quad and \quad$\sum_iip_i(n)\in(0,\infty)$.}
 $$

 Let $\mathcal {F}_0=\{\emptyset,\Omega\}$ and $\mathcal
{F}_n=\sigma(X_{k,i}:\;0\leq k\leq n-1,\;i=1,2,\cdots)$,  so that $Z_n$ are
$\mathcal {F}_n$-measurable.
For $n\in \mathbb N_0$ and $p\geq1$, set
\begin{equation}
m_n(p)=\mathbb{E}X_{n,i}^p=\sum_{i=1}^\infty i^p p_i(n),\qquad m_n=m_n(1),
\end{equation}
and
\begin{equation}
\Pi_0=1,\qquad \Pi_n=\mathbb{E}Z_n=\prod_{i=0}^{n-1}m_i\;\;(n\geq 1).
\end{equation}
Then the normalized population size
$$W_n=\frac{Z_n}{\Pi_n}$$
is a nonnegative martingale with
respect to the filtration $\mathcal{F}_n$, and the limit
$$W=\lim_{n\rightarrow\infty}W_n\qquad a.s.$$ exists with $\mathbb{E}W\leq1$. It is  known that there is a
nonnegative but possibly infinite random variable $Z_\infty$ such
that $Z_n\rightarrow Z_\infty$ in distribution as
$n\rightarrow\infty$. We are interested in the supercritical case
where $\mathbb{P}(Z_\infty=0)<1$, so that by (\cite{jajer},  Corollary 3), either
$\sum_{n=0}^\infty(1-p_1(n))<\infty$, or
$\lim_{n\rightarrow\infty}\Pi_n=\infty$. Here we consider the usual case where
$\lim_{n\rightarrow\infty}\Pi_n=\infty$.
\\*

Let $\bar X_n=X_n/m_n$, where $X_n$ has distribution $\{p_i(n):i\in\mathbb{N}_0\}$ (the offspring distribution of particles of $n$-th generation).\\

We first give a sufficient condition for an  exponential convergence rate of $W-W_n$.

\begin{thm}\label{CRT1.1}
Let $p\in(1,2)$ and  $\frac{1}{p}+\frac{1}{q}=1$.
\begin{itemize}
 \item[](i) If $\Pi_n\uparrow\infty$ and
$\sum_n\frac{1}{\Pi_n^{
\varepsilon/p}}\frac{m_n(p+\varepsilon)}{m_n^{p+\varepsilon}}<\infty$
for some $0<\varepsilon<2-p$, then
\begin{equation}\label{CRE1.1.2}
W-W_n=o(\Pi_n^{-{1}/{q}})\qquad a.s..
\end{equation}
\item[](ii) If $ a:=\liminf_{n}\frac{\log
\Pi_n}{n}\in(0,\infty)$ and $\sum_n
m^{-\frac{\varepsilon'n}{p}}\frac{m_n(p+\varepsilon)}{m_n^{p+\varepsilon}}<\infty$
for some $0< \varepsilon'<\varepsilon<2-p$, where $m:=e^a$,
then
\begin{equation}\label{CRE1.1.3}
W-W_n=o(m^{-{n}/{q}})\qquad a.s..
\end{equation}
\end{itemize}
\end{thm}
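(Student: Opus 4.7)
The plan is to derive the pointwise rate from an $L^r$-estimate on $W-W_n$ (with $r:=p+\varepsilon\in(p,2)$) and convert it via Borel--Cantelli. The numerical engine is the identity $r/q-(r-1)=-\varepsilon/p$, which reproduces the exponents $P_n^{-\varepsilon/p}$ in (i) and $m^{-\varepsilon'n/p}$ in (ii).

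The $L^r$-estimate has two layers. First, $W_{n+1}-W_n=P_n^{-1}\sum_{i=1}^{Z_n}(X_{n,i}/m_n-1)$ is, conditional on $\mathcal{F}_n$, a sum of $Z_n$ i.i.d.\ centered variables whose $r$-th moment is at most $2m_n(r)/m_n^r$ (use $|X_n-m_n|^r\le X_n^r+m_n^r$ and Jensen's $m_n(r)\ge m_n^r$). The von Bahr--Esseen inequality for $r\in(1,2]$ then yields $\mathbb{E}|W_{n+1}-W_n|^r\le C P_n^{-(r-1)}m_n(r)/m_n^r$. Second, for each fixed $n$, $(W_{n+k}-W_n)_{k\ge 0}$ is a martingale with limit $W-W_n$, so the Burkholder-type bound $\mathbb{E}|M_N|^r\le C_r\sum\mathbb{E}|\Delta_k|^r$ valid for $1<r\le2$, passed to the limit by Fatou, gives
$$\mathbb{E}|W-W_n|^r\;\le\;C\sum_{k\ge n}\frac{1}{P_k^{r-1}}\,\frac{m_k(r)}{m_k^r}.$$

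For part (ii), the assumption $\liminf_n(\log P_n)/n=a>0$ gives $P_k\ge m^{k(1-\theta)}$ for any fixed small $\theta>0$ and $k$ large. Choose $\theta$ so small that $(r-1)(1-\theta)>\varepsilon'/p$ and $(\varepsilon-\varepsilon')/p>(r-1)\theta$ (feasible since $\varepsilon'<\varepsilon$ and $r-1\ge p-1>0$). Multiplying the tail bound by $m^{nr/q}$ and splitting the powers of $m$ accordingly, a geometric-series computation combined with $\sum_k m^{-k\varepsilon'/p}m_k(r)/m_k^r<\infty$ produces $m^{nr/q}\mathbb{E}|W-W_n|^r\le Cm^{-\eta n}$ for some $\eta>0$; Markov plus Borel--Cantelli then give $W-W_n=o(m^{-n/q})$ a.s.

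For part (i), a naive Borel--Cantelli over all $n$ fails: swapping sums in $\sum_n P_n^{r/q}\mathbb{E}|W-W_n|^r$ produces $\sum_k(k+1)P_k^{-\varepsilon/p}m_k(r)/m_k^r$, with an unwanted factor $k+1$ that the hypothesis need not control. The remedy is to apply Borel--Cantelli along a sparse subsequence $(n_j)$ chosen so that $P_{n_j}$ roughly doubles, e.g.\ $n_{j+1}:=\min\{n>n_j:P_n\ge 2P_{n_j}\}$; then $P_n<2P_{n_j}$ on the block $n_j\le n<n_{j+1}$, each $k$ lies in exactly one block, and the swap telescopes back to the hypothesis, yielding $P_{n_j}^{1/q}(W-W_{n_j})\to 0$ a.s. To bridge the gaps one controls $\max_{n_j\le n<n_{j+1}}|W_n-W_{n_j}|$ by Doob's $L^r$-maximal inequality applied to the martingale $(W_n-W_{n_j})_{n\ge n_j}$, whose $L^r$-bound is $\le C\sum_{k=n_j}^{n_{j+1}-1}\mathbb{E}|W_{k+1}-W_k|^r$, and the same sparse Borel--Cantelli applies. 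Combining the two pieces with $P_n^{1/q}\le 2^{1/q}P_{n_j}^{1/q}$ gives $P_n^{1/q}(W-W_n)\to 0$ a.s. The principal obstacle is exactly this $L^r\Rightarrow$ a.s.\ passage in (i); everything else is bookkeeping in the exponents.
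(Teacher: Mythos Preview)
Your proof is correct and follows a genuinely different route from the paper. The paper proceeds in the spirit of Asmussen (1976): it truncates $\bar X_{n,i}$ at level $P_n^{1/p}$, splits $P_n^{1/q}(W_{n+1}-W_n)$ into three pieces (a tail term, a centered martingale-increment term, and a conditional mean term), verifies that each of the three series converges almost surely, and then invokes an Abel-type lemma (if $a_n\uparrow\infty$ and $\sum_n a_n(W_{n+1}-W_n)$ converges, then $W-W_n=o(a_n^{-1})$) to conclude. Part (ii) is done the same way with $m^{n/q}$ in place of $P_n^{1/q}$. Your approach instead produces a clean $L^r$-bound on the full tail $W-W_n$ via von Bahr--Esseen/Burkholder, and upgrades it to a.s.\ decay by Markov plus Borel--Cantelli; the sparse doubling subsequence together with Doob's maximal inequality is an elegant way around the extra linear factor that would otherwise appear in part~(i). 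The tradeoffs: the paper's three-series argument avoids any subsequence or maximal-inequality machinery and works directly on the increments, while your approach isolates a quantitative moment estimate $\mathbb{E}|W-W_n|^r\le C\sum_{k\ge n}P_k^{-(r-1)}m_k(r)/m_k^r$ that is of independent interest and makes the role of the exponent identity $r/q-(r-1)=-\varepsilon/p$ transparent.
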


We next  give a sufficient condition for a polynomial convergence rate of $W-W_n$.

\begin{thm}\label{CRT1.2}
Let $\alpha>0$. Assume that $ \liminf_{n}\frac{\log
\Pi_n}{n}=a>0$ and
\begin{equation}
\label{log_moment}
\sum_{n=1}^\infty \frac{ \mathbb{E}\bar X_n(\log^+\bar X_n)^{1+\alpha+\varepsilon }   }{n^{1+\varepsilon}}<\infty
\end{equation}
for some $\varepsilon>0$.  Then
\begin{equation}\label{CRE1.1.4}
 W-W_n=o(n^{-\alpha})\qquad a.s.;
\end{equation}
moreover, when $\alpha\geq1$, the series $\sum_n (W -W_n)$ converges $a.s..$
\end{thm}

\noindent\textbf{Remark.}
 In Theorem \ref{CRT1.2},
 the moment condition ($\ref{log_moment}$)  can be replaced by
\begin{equation}
\label{log_moment2}
\sum_n \frac{   \mathbb{E}\bar X_n(\log^+\bar X_n)^{1+\alpha} (\log^+ \log^+ \bar X_n)^{1+\varepsilon}  }{n(\log
n)^{1+\varepsilon}}<\infty,
\end{equation}
or
\begin{equation}
\label{log_moment3}
\sum_n \frac{\mathbb{E}\bar X_n (\log^+\bar X_n)^{1+\alpha} (\log^+\log^+ \bar X_n) (\log^+ \log^+\log^+ \bar X_n)^{1+\varepsilon}  }{n (\log n) (\log \log
n)^{1+\varepsilon}}<\infty,
\end{equation}
etc, for some $\varepsilon >0$, where $n$ is sufficiently large.
This will be easily  seen in the proof.

\medskip
In fact, the above two theorems are easy consequences of the following more general and interesting result, which gives a sufficient condition for the convergence rate of $W-W_n$ to be $o(a_n^{-1})$, for any
given sequence $(a_n)$ increasing to $+\infty$ with $n$.

\begin{thm}\label{CRP2.3}
Let $(a_n)$ be a positive sequence of real numbers satisfying $a_n\uparrow\infty$. If there is an increasing function $g: [0,\infty) \rightarrow (0,\infty)$ such that
the function $x\mapsto \frac{x}{g(x)}$ is also increasing on $[0,+\infty)$ and that
\begin{equation}\label{CRPE}
\sum_{n=0}^{\infty} \frac{a_n}{g\left(\frac{\Pi_n}{a_n}\right)}\mathbb E \bar X_n g(\bar X_n)<\infty,
\end{equation}
then
$$W-W_n=o(a_n^{-1})\qquad a.s..$$
\end{thm}

Before giving the proof of Theorem \ref{CRP2.3}, let us first show how this theorem
implies Theorems \ref{CRT1.1} and \ref{CRT1.2}.

\begin{proof}[Proof of Theorem \ref{CRT1.1}]
(i) Let $a_n=\Pi_n^{1/q}$, and define
  $g(x)=x^{p+\varepsilon-1}$ for $x\geq  1$,  and $g(x)=1$ for $x\in [0,1]$. Clearly, $x\mapsto g(x)$ and $x\mapsto \frac{x}{g(x)}$ are increasing on $[0,\infty)$.
   Notice that $\frac{\Pi_n}{a_n}= {\Pi_n}^{1/p} \uparrow \infty$, and that
   \begin{eqnarray*}
\frac{a_n} {g\left(\frac{\Pi_n}{a_n}\right)} = \frac{ \Pi_n^{1/q} } {{\Pi_n}^{( p+\varepsilon-1)/p}  }
  = \frac{1}{  \Pi_n^{\varepsilon/p}}.
  \end{eqnarray*}
Notice also that, since $ \mathbb E \bar X_n = 1$, we have  $\mathbb E {\bar X_n}^{p+\varepsilon} \geq 1$, so that
  $$ \mathbb E \bar X_n g(\bar X_n) \leq 1 + \mathbb E {\bar X_n}^{p+\varepsilon} \leq 2  \mathbb E {\bar X_n}^{p+\varepsilon} = 2 \frac{m_n(p+\varepsilon)}{m_n^{p+\varepsilon}}.$$
Therefore Part (i) of   Theorem \ref{CRT1.1} is a direct consequence of Theorem \ref{CRP2.3}.

(ii) Let $a_n=m^{n/q}$ and define $g$ as in Part (i) above.
Since $\liminf_n\frac{1}{n}\log \Pi_n=a$, we have for all $\delta>0$ and  $n$ large enough,
$$\Pi_n>e^{(a-\delta)n}.$$
 Take $\delta>0$ small enough such that
$$\frac{p+\varepsilon}{q}a-(a-\delta)(p+\varepsilon-1)<-\frac{\varepsilon'}{p}a.$$
Then for $n$ large enough,
\begin{eqnarray*}
\frac{a_n}{g\left(\frac{\Pi_n}{a_n}\right)}=m^{\frac{p+\varepsilon}{q}n}\Pi_n^{-(p+\varepsilon-1)}
\leq \exp{\left(\left[\frac{p+\varepsilon}{q}a-(a-\delta)(p+\varepsilon-1)\right]n\right)}<\exp{\left(-\frac{\varepsilon'}{p}an\right)}.
\end{eqnarray*}
Therefore,  the convergence of the series  $\sum_n \frac{a_n}{g\left(\frac{\Pi_n}{a_n}\right)}\mathbb E \bar X_n g(\bar X_n)$ is ensured by that of the series  $\sum_n
m^{-\frac{\varepsilon'n}{p}}\frac{m_n(p+\varepsilon)}{m_n^{p+\varepsilon}}$.
Applying Theorem \ref{CRP2.3} still yields the desired result.
\end{proof}

\begin{proof}[Proof of Theorem \ref{CRT1.2}]
Let $a_n=n^{\alpha}$, and define
$g(x)=\left(\log x\right)^{1+\alpha+\varepsilon}$ for $x\geq c$, and $g(x) = g(c)$ for $x\in [0,c]$, where
 $c>0$ is taken large enough such that $g(c) >0$ and that $x \mapsto \frac{x}{g(x)}$ is increasing on $[c,\infty)$.
 Then $x\mapsto  g(x)$ and $x\mapsto \frac{x}{g(x)}$ are increasing on $[0,+\infty)$. The condition that $\liminf_n\frac{1}{n}\log \Pi_n=a>0$ implies that
$$\liminf_{n\rightarrow\infty} \frac{1}{n}\log\frac{\Pi_n}{n^{\alpha}}=a;$$
in particular, $\frac{\Pi_n}{a_n} \rightarrow +\infty$.
Therefore, for $n$ large enough, we have  $\log  \frac{\Pi_n}{n^{\alpha}}>\frac{an}{2}$, and
\begin{eqnarray*}
\frac{a_n} {g\left(\frac{\Pi_n}{a_n}\right)}
= \frac{n^\alpha}{ (\log  \frac{\Pi_n}{n^{\alpha}})^{1+\alpha+\varepsilon} }
\leq  \frac{n^\alpha}{(\frac{an}{2})^{1+\alpha+\varepsilon}}
=  \frac{ \big(\frac{2}{a}\big)^{1+\alpha+\varepsilon} }{n^{1+\varepsilon}}.
\end{eqnarray*}
On the other hand, by considering $\bar X_n \leq c$ and $\bar X_n >c$, we see that
$$ \mathbb E \bar X_n g(\bar X_n) \leq  cg(c) + \mathbb E \bar X_n (\log^+ \bar X_n)^{1+\alpha+\varepsilon}.$$
Thus   (\ref{log_moment}) implies (\ref{CRPE}) with $a_n$ and $g$ defined as above, and
 the result follows from Theorem \ref{CRP2.3} and Lemma \ref{CRL1.2.2}.
\end{proof}

\medskip
It remains the proof of Theorem \ref{CRP2.3}. The proof  will be based on a refinement of
an argument of  Asmussen (1976, \cite{soren}). The crucial idea is to find an appropriate truncation to show the convergence of the series $\sum_n a_n(W_{n+1}-W_n)$, which implies  the  convergence rate of $W-W_n$ by means of the following lemma.

\begin{lem}[\cite{soren},  Lemma 2]\label{CRL1.2.2}
Let $(a_n)$ be a sequence of real numbers. If $a_n\geq0$,
$a_n\uparrow\infty $, then the a.s. convergence of the series $\sum_n a_n(W_{n+1}-W_n)$ implies that $W-W_n=o(a_n^{-1})\;a.s.$.
Moreover,  for
$\alpha\geq1$, the a.s. convergence of the series $\sum_n n^\alpha(W_{n+1}-W_n)$  implies that  of $\sum_n
(W -W_n)$.
\end{lem}

\bigskip
\begin{proof}[Proof of Theorem \ref{CRP2.3}]Let
\begin{eqnarray*}
\bar{X}_{n,i}&=&\frac{X_{n,i}}{m_n},\qquad\bar{X}'_{n,i}=\bar{X}_{n,i}\mathbf{1}_{\{\bar{X}_{n,i}\leq \frac{\Pi_n}{a_n}\}},\\
S_n&=&a_n(W_{n+1}-W_n)=\frac{a_n}{\Pi_n}\sum_{i=1}^{Z_n}(\bar{X}_{n,i}-1),\\
S'_n&=&\frac{a_n}{\Pi_n}\sum_{i=1}^{Z_n}(\bar{X}'_{n,i}-1).
\end{eqnarray*}
Observe that
\begin{equation}\label{CRE1.2.1TP}
\sum_{n=0}^\infty S_n=\sum_{n=0}^\infty
(S_n-S_n')+\sum_{n=0}^\infty(S_n'-\mathbb{E}(S_n'\mid\mathcal
{F}_{n}))+\sum_{n=0}^\infty \mathbb{E}(S_n'\mid\mathcal {F}_{n}).
\end{equation}
We shall prove that each of  the three series on the right hand-side   converges a.s., so that $\sum_{n=0}^\infty S_n$ converges a.s. and the result follows from Lemma \ref{CRL1.2.2}.  According to Asmussen (1976, \cite{soren}), it suffices to show that
\begin{equation}\label{CRE1.2.1}
\sum_{n=0}^\infty \mathbb{P}(S_n\neq S_n')<\infty,\sum_{n=0}^\infty
\mathbb{E}(S_n'-\mathbb{E}(S_n'\mid\mathcal{F}_{n}))^2<\infty\;
\text{and}\;0\leq-\sum_{n=0}^\infty \mathbb{E}S'_n<\infty.
\end{equation}
Indeed, $\sum_n \mathbb{P}(S_n\neq S_n')<\infty$ implies the a.s. convergence of the series $\sum_n(S_n-S_n')$ by the Borel-Cantelli lemma;  $\sum_n
\mathbb{E}(S_n'-\mathbb{E}(S_n'\mid\mathcal{F}_{n}))^2<\infty$ means that the martingale $\{(A_n, \mathcal
{F}_{n+1}) \}$, with $A_n=\sum_{k=0}^n(S_k'-\mathbb{E}(S_k'\mid\mathcal {F}_{k}))$, is ${L}^2$-bounded, so that it converges  a.s., meaning that the series
$\sum_n(S_n'-\mathbb{E}(S_n'\mid\mathcal {F}_{n})) $ converges a.s.;  the series $\sum_n \mathbb{E}(S_n'\mid\mathcal {F}_{n})$ converges a.s. since
$ - \mathbb{E}(S_n'\mid\mathcal {F}_{n}) = a_n \frac{Z_n}{\Pi_n} \mathbb{E} \bar{X}_{n,i}\mathbf{1}_{\{\bar{X}_{n,i} > \frac{\Pi_n}{a_n}\}} \geq 0 $ (using $ \mathbb{E} \bar{X}_{n,i} =1$) a.s. and $0\leq-\sum_n \mathbb{E}S'_n<\infty$.

For the first series in  (\ref{CRE1.2.1}),
since
$$\left\{S_n\neq S_n'\right\}\subset \bigcup_{i=1}^{Z_n}\left\{\bar{X}_{n,i}> \frac{\Pi_n}{a_n}\right\},$$
and $x\mapsto xg(x)$ is  increasing  on $[0,+\infty)$ (so that
$\bar{X}_{n}> \frac{\Pi_n}{a_n}$ implies  $\bar{X}_{n}g(\bar{X}_{n})  \geq  \frac{\Pi_n}{a_n} g\left(\frac{\Pi_n}{a_n}\right)$), using Markov's inequality we obtain
\begin{eqnarray}\label{CRPP1}
 \mathbb{P}(S_n\neq S_n')&\leq&\
\mathbb{E}\sum_{i=1}^{Z_n}\mathbf{1}_{\{\bar{X}_{n,i}> \frac{\Pi_n}{a_n}\}}\nonumber\\
&=&
\Pi_n\mathbb{P}\left(\bar{X}_{n}> \frac{\Pi_n}{a_n}\right)\nonumber\\
&\leq &
\Pi_n\mathbb{P}\left(\bar{X}_{n} g(\bar{X}_{n})   \geq  \frac{\Pi_n}{a_n}g\left(\frac{\Pi_n}{a_n}\right)  \right)\nonumber\\
&\leq& \frac{a_n}{g\left(\frac{\Pi_n}{a_n}\right)}\mathbb E \bar X_n g(\bar X_n).
\end{eqnarray}

For the second series in  (\ref{CRE1.2.1}),
 since  $x\mapsto \frac{x}{g(x)}$ is increasing  on $[0,+\infty)$,
$\bar{X}_{n}\leq \frac{\Pi_n}{a_n}$ implies  $\frac{\bar{X}_{n}}{g(\bar{X}_{n})}
\leq  \frac{ \frac{\Pi_n}{a_n}}  {g(\frac{\Pi_n}{a_n})}$, so that
\begin{eqnarray}\label{CRPP2}
\mathbb{E}(S_n'-\mathbb{E}(S_n'\mid\mathcal{F}_{n}))^2
&=& \mathbb{E}\mbox{Var}(S'_n|\mathcal{F}_{n}) =\frac{a_n^2}{\Pi_n}\mbox{Var}\bar{X}'_n\nonumber\\
&\leq& \frac{a_n^2}{\Pi_n}
       \mathbb{E}\bar{X}_n^2 \mathbf{1}_{\{\bar{X}_{n}\leq\frac{\Pi_n}{a_n}\}}\nonumber\\
&=&     \frac{a_n^2}{\Pi_n}
     \mathbb{E}\bar{X}_n g(\bar{X}_n) \frac{\bar{X}_n}{g(\bar{X}_n)}
               \mathbf{1}_{\{\bar{X}_{n} \leq \frac{\Pi_n}{a_n}\}}\nonumber\\
&\leq&  \frac{a_n^2}{\Pi_n}   \mathbb E \bar X_n g(\bar X_n)
                                      \frac{\frac{\Pi_n}{a_n} }{g(\frac{\Pi_n}{a_n})} \nonumber\\
&= &  \frac{a_n}{g\left(\frac{\Pi_n}{a_n}\right)} \mathbb E \bar X_n g(\bar X_n).
\end{eqnarray}

For the last series in  (\ref{CRE1.2.1}),  recall that
$ \mathbb{E}(S_n'\mid\mathcal{F}_n)
=-\frac{a_n}{\Pi_n}Z_n\mathbb{E}\bar{X}_n
\mathbf{1}_{\{\bar{X}_{n}>\frac{\Pi_n}{a_n}\}}\leq0.
$
Therefore, since $g(x)$ is  increasing  on $[0,+\infty)$, we have
\begin{eqnarray}\label{CRPP3}
0\leq-\mathbb{E}S'_n
= {a_n}\mathbb{E}\bar{X}_n
\mathbf{1}_{\{\bar{X}_{n}>\frac{\Pi_n}{a_n}\}}
\leq \frac{a_n}{g\left(\frac{\Pi_n}{a_n}\right)}\mathbb E \bar X_n g(\bar X_n).
\end{eqnarray}
 By (\ref{CRPP1}), (\ref{CRPP2}) and (\ref{CRPP3}),  we see that (\ref{CRPE}) implies (\ref{CRE1.2.1}). This completes the proof.
\end{proof}

\section{ Proofs of Theorems \ref{CRT2.1.1} and \ref{CRT2.1.2}}\label{CR1S2}
For a branching process $(Z_n)$ in a random environment, when the environment $\xi$ is fixed,  it is a branching process in a varying environment. So we can directly apply the results for a branching  process in a varying environment to $(Z_n)$ by considering the conditional probability $\mathbb{P}_\xi$ and the corresponding expectation $\mathbb{E}_\xi$.

 \begin{lem}[\cite{grin}, Theorem 1]\label{CRL2.2.1}
 Let $(\alpha_n,\beta_n)_{n\geq0}$ be a stationary and ergodic sequence of nonnegative random variables.
If  $\mathbb{E}\log\alpha_0<0$  and $\mathbb{E}\log^+\beta_0<\infty$,
 then
\begin{equation}\label{CRE2.2.1}
\sum_{n=0}^{\infty}\alpha_0\cdots\alpha_{n-1}\beta_n<\infty\quad a.s..
\end{equation}
\end{lem}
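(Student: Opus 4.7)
The plan is to take logarithms and show that the general term $\alpha_0\cdots\alpha_{n-1}\beta_n$ decays geometrically almost surely; geometric decay trivially implies summability. The whole argument is driven by the fact that the negative drift from $\log\alpha_k$ accumulates linearly in $n$, while $\log\beta_n$ is of smaller order.

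First I would apply Birkhoff's ergodic theorem to the stationary ergodic sequence $(\log\alpha_n)_{n\geq 0}$. The hypothesis $\mathbb{E}\log\alpha_0\in(-\infty,0)$ forces $\alpha_0>0$ a.s.\ and makes $\log\alpha_0$ integrable, so, writing $-\gamma:=\mathbb{E}\log\alpha_0<0$, one has
\begin{equation*}
\frac{1}{n}\sum_{k=0}^{n-1}\log\alpha_k \longrightarrow -\gamma \qquad\text{a.s.}
\end{equation*}

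Next I would control $\log\beta_n$. Since $(\beta_n)$ is stationary and $\mathbb{E}\log^+\beta_0<\infty$, for every $\varepsilon>0$ one has
\begin{equation*}
\sum_{n\geq 1}\mathbb{P}(\log^+\beta_n>\varepsilon n)=\sum_{n\geq 1}\mathbb{P}(\log^+\beta_0>\varepsilon n)\leq \frac{1}{\varepsilon}\mathbb{E}\log^+\beta_0<\infty,
\end{equation*}
so the Borel--Cantelli lemma yields $\log^+\beta_n/n\to 0$ a.s. (equivalently, one can apply the ergodic theorem to $\log^+\beta_n$).

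Combining the two estimates gives
\begin{equation*}
\limsup_{n\to\infty}\frac{1}{n}\log\bigl(\alpha_0\cdots\alpha_{n-1}\beta_n\bigr)\leq -\gamma+\limsup_{n\to\infty}\frac{\log^+\beta_n}{n}=-\gamma<0\qquad\text{a.s.},
\end{equation*}
so for any $0<\gamma'<\gamma$ and almost every realization there exists $N(\omega)$ with $\alpha_0\cdots\alpha_{n-1}\beta_n\leq e^{-\gamma' n}$ for all $n\geq N(\omega)$; summing this geometric bound yields \eqref{CRE2.2.1}. There is no real obstacle; the only mild subtlety is the joint-versus-marginal stationarity issue, but it is harmless since the ergodic theorem is only used on the first coordinate and Borel--Cantelli on the second requires only the (automatic) marginal stationarity of $(\beta_n)$. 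Degenerate cases ($\beta_n=0$ or $\alpha_k=0$) either help the conclusion or are excluded by the hypotheses.
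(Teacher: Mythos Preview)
Your proof is correct and follows essentially the same route as the paper: the paper's justification is the one-line remark that, by the ergodic theorem, $\limsup_{n\to\infty}(\alpha_0\cdots\alpha_{n-1}\beta_n)^{1/n}<1$ a.s., which is exactly what your two-step argument (ergodic theorem for $\log\alpha_k$, Borel--Cantelli for $\log^+\beta_n$) unpacks in detail. The only minor point is that the hypothesis $\mathbb{E}\log\alpha_0<0$ may include $\mathbb{E}\log\alpha_0=-\infty$; this case is not covered by your integrability reading but is easily handled by truncation or by noting that the Ces\`aro averages then diverge to $-\infty$.
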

To see the conclusion of Lemma \ref{CRL2.2.1}, it suffices to notice that by the ergodic theorem, under the above moment conditions, we have
$$\limsup_{n\rightarrow\infty}\ (\alpha_0\cdots\alpha_{n-1}\beta_n)^{1/n}<1  \quad a.s..$$
 \\

\begin{proof}[Proof of Theorem \ref{CRT2.1.1}]
 We shall prove Theorem \ref{CRT2.1.1} by applying Corollary \ref{CRT1.1}.
By the ergodic theorem,
$$\lim_{n\rightarrow\infty}\frac{\log \Pi_n}{n}=\mathbb{E}\log m_0>0\qquad a.s..$$
Let $q_1=q+\varepsilon$ and $1/p_1+1/q_1=1$. Take $\varepsilon_1=p-p_1\in(0,2-p_1)$ and
$0<\varepsilon'_1<\varepsilon_1$. We shall apply Lemma
\ref{CRL2.2.1} to the special case where  $(\alpha_n)$ is  deterministic:
$\alpha_n=m^{-\frac{\varepsilon_1'}{p_1}}$,  and
$\beta_n=\mathbb{E}_{\xi}\bar{X}_n^{p_1+\varepsilon_1}=\frac{m_n(p )}{m_n^{p }}$. Recall that $m=\exp(\mathbb{E}\log m_0)\in(1,\infty)$, so that
$\log \alpha_0\in(-\infty,0)$ ; obviously, $\mathbb{E}\log^+\beta_0=\mathbb{E}\log
\mathbb{E}_{\xi}(\frac{Z_1}{m_0})^{p }<\infty$. Therefore by Lemma
\ref{CRL2.2.1} , $\sum_{n=0}^\infty
m^{-\frac{\varepsilon'_1n}{p_1}}\frac{m_n(p_1+\varepsilon_1)}{m_n^{p_1+\varepsilon_1}}<\infty$ a.s.. So Theorem \ref{CRT2.1.1} is a direct consequence of Theorem \ref{CRT1.1}(ii).

\end{proof}

\begin{proof}[Proof of Theorem \ref{CRT2.1.2}] Notice that under the stronger moment condition $\mathbb{E}\frac{Z_1}{m_0}(\log^+\frac{Z_1}{m_0})^{1+\alpha + \varepsilon }<\infty$ for some $\varepsilon >0$, the conclusion in Theorem \ref{CRT2.1.2} is a direct consequence of Theorem \ref{CRT1.2}.
Instead of using Theorem \ref{CRT1.2} directly, to obtain more precise result assuming only $\mathbb{E}\frac{Z_1}{m_0}(\log^+\frac{Z_1}{m_0})^{1+\alpha  }<\infty$, we choose another truncation
function.
Set $\bar{X}'_{n,i}$, $S_n$, $S'_n$ like in the proof of
Theorem \ref{CRP2.3} but with the truncation function
$\mathbf{1}_{\{\bar{X}_{n,i}(\log\bar{X}_{n,i})^{\kappa}\leq
\frac{\Pi_n}{n^{\alpha}}\}}$ in place of
$\mathbf{1}_{\{\bar{X}_{n,i}\leq \frac{\Pi_n}{a_n}\}}$. The
value of $\kappa>0$ will be taken suitably large. It suffices to
prove the $\mathbb{P}_{\xi}$-$a.s.$ convergence  of the following three
series :
$$\sum_{n=0}^\infty
(S_n-S_n'),\qquad\sum_{n=0}^\infty(S_n'-\mathbb{E}_{\xi}(S_n'\mid\mathcal
{F}_{n})),\qquad\sum_{n=0}^\infty \mathbb{E}_{\xi}(S_n'\mid\mathcal {F}_{n}).$$
By a simple calculation, we have
$$0\leq
S_n-S'_n=\frac{n^{\alpha}}{\Pi_n}\sum_{i=1}^{Z_n}\bar{X}_{n,i}\mathbf{1}_{\{\bar{X}_{n,i}(\log\bar{X}_{n,i})^{\kappa}>
\frac{\Pi_n}{n^{\alpha}}\}},$$
$$\mathbb{E}_{\xi}(S_n-S'_n)=n^{\alpha}\mathbb{E}_{\xi}\bar{X}_n\mathbf{1}_{\{\bar{X}_{n}(\log\bar{X}_{n})^{\kappa}>
\frac{\Pi_n}{n^{\alpha}}\}},$$ and
$$0\leq
-\mathbb{E}_{\xi}S'_n=n^{\alpha}\mathbb{E}_{\xi}\bar{X}_n\mathbf{1}_{\{\bar{X}_{n}(\log\bar{X}_{n})^{\kappa}>
\frac{\Pi_n}{n^{\alpha}}\}}.$$ By the ergodic theorem,
\begin{equation}\label{CRE+1}
\lim_{n\rightarrow\infty}\frac{1}{n}\log\frac{\Pi_n}{n^\alpha}=\mathbb{E}\log
m_0>0\qquad a.s..
\end{equation}
So for some constant $c_1>1$, we have $\Pi_n/n^{\alpha}\geq c_1^n$  for $n$ large enough,
therefore,
$$
n^{\alpha}\mathbb{E}_{\xi}\bar{X}_n\mathbf{1}_{\{\bar{X}_{n}(\log\bar{X}_{n})^{\kappa}>
\frac{\Pi_n}{n^{\alpha}}\}}\leq
n^{\alpha}\mathbb{E}_{\xi}\bar{X}_n\mathbf{1}_{\{\bar{X}_{n}(\log\bar{X}_{n})^{\kappa}>
c_1^n\}}.
$$
Notice that
\begin{eqnarray*}
&&\mathbb{E}\left(\sum_{n=0}^{\infty}n^{\alpha}\mathbb{E}_{\xi}\bar{X}_n\mathbf{1}_{\{\bar{X}_{n}(\log\bar{X}_{n})^{\kappa}>
c_1^n\}}\right)\\
&=&\sum_{n=0}^{\infty}n^{\alpha}\mathbb{E}\bar{X}_0\mathbf{1}_{\{\bar{X}_{0}(\log\bar{X}_{0})^{\kappa}>
c_1^n\}}\\
&=&\mathbb{E}\bar{X}_0\sum_{n=0}^{\infty}n^{\alpha}\mathbf{1}_{\{\bar{X}_{0}(\log\bar{X}_{0})^{\kappa}>
c_1^n\}}\\
&=&\mathbb{E}\bar{X}_0\sum_{n=0}^{n_0}n^{\alpha}\leq\mathbb{E}\bar{X}_0{n_0}^{\alpha+1},
\end{eqnarray*}
where $$n_0=\max\left\{0, \left\lceil (\log c_1)^{-1}(\log \bar{X}_0+\kappa\log\log \bar{X}_0 )-1\right\rceil\right\}$$
with the notation $\left\lceil a\right\rceil=\min\{k\in\mathbb Z: k\geq a\}$, and
$$\mathbb{E}\bar{X}_0{n_0}^{\alpha+1}\leq  C \mathbb{E}\bar{X}_0(\log^+\bar{X}_0)^{\alpha+1}<\infty, $$
where $C$ is a positive constant (suitably large). Hence it follows
the $\mathbb{P}_{\xi}$-a.s. convergences of $\sum_n (S_n-S_n')$
and $\sum_n \mathbb{E}_{\xi}(S_n'\mid\mathcal {F}_{n})$.
Now we consider the series
$\sum_n(S_n'-\mathbb{E}_{\xi}(S_n'\mid\mathcal {F}_{n}))$. We have
$$\mathbb{E}_{\xi}(S_n'-\mathbb{E}_{\xi}(S_n'\mid\mathcal
{F}_{n}))^2\leq\frac{n^{2\alpha}}{\Pi_n}\mathbb{E}_{\xi}\bar{X}^2_n\mathbf{1}_{\{\bar{X}_{n}(\log\bar{X}_{n})^{\kappa}\leq
\frac{\Pi_n}{n^{\alpha}}\}}.$$
 Take $f(x)=x(\log
x)^{-(\alpha+1+\delta)}\;(\delta>0)$. It is clear that $f(x)$ is increasing and
positive on $(c,+\infty)$, where $c>e$ is a suitably large
constant. For $n$ large enough,
\begin{eqnarray*}
&&\frac{n^{2\alpha}}{\Pi_n}\mathbb{E}_{\xi}\bar{X}^2_n\mathbf{1}_{\{\bar{X}_{n}(\log\bar{X}_{n})^{\kappa}\leq
\frac{\Pi_n}{n^{\alpha}}\}}\\
&=&\frac{n^{2\alpha}}{\Pi_n}\mathbb{E}_{\xi}\bar{X}^2_n\mathbf{1}_{\{\bar{X}_{n}(\log\bar{X}_{n})^{\kappa}\leq
\frac{\Pi_n}{n^{\alpha}}\}}\mathbf{1}_{\{\bar{X}_n\geq
c\}}+\frac{n^{2\alpha}}{\Pi_n}\mathbb{E}_{\xi}\bar{X}^2_n\mathbf{1}_{\{\bar{X}_{n}(\log\bar{X}_{n})^{\kappa}\leq
\frac{\Pi_n}{n^{\alpha}}\}}\mathbf{1}_{\{\bar{X}_n< c\}}\\
&\leq&\frac{n^{2\alpha}}{\Pi_n}\mathbb{E}_{\xi}\bar{X}^2_n\frac{f(\frac{\Pi_n}{n^{\alpha}})}{f(\bar{X}_n(\log\bar{X}_n)^{\kappa})}\mathbf{1}_{\{\bar{X}_n\geq
c\}}+c^2\frac{n^{2\alpha}}{\Pi_n}\\
&=&\frac{n^\alpha}{(\log \frac{\Pi_n}{n^\alpha} )^{\alpha+1+\delta}}\mathbb{E}_\xi\frac{\bar X_n(\log X_n+\kappa\log\log \bar X_n)^{\alpha+1+\delta}}{(\log \bar X_n)^\kappa}\mathbf{1}_{\{\bar{X}_n\geq
c\}}+c^2\frac{n^{2\alpha}}{\Pi_n}\\
&\leq&C\frac{1}{n^{1+\delta}}\mathbb{E}_\xi\bar{X}_n(\log \bar{X}_n)^{\alpha+1+\delta-\kappa}\mathbf{1}_{\{\bar{X}_n\geq
c\}}+c^2\frac{n^{2\alpha}}{\Pi_n},
\end{eqnarray*}
where the last step holds by (\ref{CRE+1}) and the fact that $\lim_{x\rightarrow\infty}\frac{(\log x+\kappa\log \log x)^{\alpha+1+\delta}}{(\log x)^{\alpha+1+\delta}}=1$.
It is easy to see that
$\sum_{n=1}^{\infty}\frac{n^{2\alpha}}{\Pi_n}<\infty$ a.s.. Besides,
taking $\kappa\geq\delta$, we obtain
\begin{eqnarray*}&&\mathbb{E}\left(\sum_{n=1}^{\infty}\frac{1}{n^{1+\delta}}\mathbb{E}_{\xi}\bar{X}_n(\log\bar{X}_n)^{\alpha+1+\delta-\kappa}\mathbf{1}_{\{\bar{X}_n\geq
c\}}\right)\\
&=&\mathbb{E}\bar{X}_0(\log\bar{X}_0)^{\alpha+1+\delta-\kappa}\mathbf{1}_{\{\bar{X}_0\geq
c\}}\sum_{n=1}^{\infty}\frac{1}{n^{1+\delta}}\\
&\leq&C
\mathbb{E}\bar{X}_0(\log^+\bar{X}_0)^{\alpha+1}\sum_{n=1}^{\infty}\frac{1}{n^{1+\delta}}<\infty.
\end{eqnarray*}
Hence $\sum_n\frac{1}{n^{1+\delta}}\mathbb{E}_{\xi}\bar{X}_n(\log\bar{X}_n)^{\alpha+1+\delta-\kappa}\mathbf{1}_{\{\bar{X}_n\geq
c\}}<\infty$ $\mathbb{P}_{\xi}$-a.s..
 Thus
$\sum_n(S_n'-\mathbb{E}_{\xi}(S_n'\mid\mathcal {F}_{n}))$
converges $\mathbb{P}_{\xi}$-a.s.. The proof is completed.

\end{proof}

\noindent\textbf{Remark.} 
 The condition $\mathbb{E}\frac{Z_1}{m_0}(\log^+\frac{Z_1}{m_0})^{1+\alpha}<\infty$ can be replaced by
$\mathbb{E}\frac{Z_1}{m_0}(\log^+ {Z_1} )^{1+\alpha}<\infty$. To see this, one can repeat the above proof  by considering the truncation function $\mathbf{1}_{\{{X}_{n,i}(\log{X}_{n,i})^{\kappa}\leq
\frac{\Pi_{n+1}}{n^{\alpha}}\}}$.

\section{Proof of Theorem \ref{CRASTP}}\label{CR1S3}
Notice that
\begin{equation}\label{CRASE5.1}
W-W_n=\frac{1}{\Pi_n}\sum_{j=1}^{Z_n}(W(n,j)-1),
\end{equation}
where $W(n,j)$ ($j=1,2,\cdots$) denotes the limit random variable in the line of
descent initiated by the $jth$ particle of the $nth$ generation.
Under $\mathbb{P}_\xi$,  $(W(n,j))_{j\geq 1}$ are
independent of each other and independent of $\mathcal{F}_n$, and have the same distribution
$\mathbb{P}_{\xi}(W(n,j)\in\cdot)=\mathbb{P}_{T^n\xi}(W\in\cdot)$.

 If $\delta_\infty(\xi)\in(0,\infty)$ a.s., let
\begin{equation}\label{CRASE5.2}
V_{n,j}=\frac{W(n,j)-1}{\delta_\infty(T^n\xi)}.
\end{equation}
Then $\mathbb{E}_\xi V_{n,j}=0$ and $\mbox{Var}_\xi V_{n,j}=1$. From (\ref{CRASE5.1}), we have
\begin{equation}\label{CRASE5.3}
\frac{\sqrt{\Pi_n}(W-W_n)}{\delta_\infty(T^n\xi)}=\frac{1}{\sqrt{Z_n}}\sum_{j=1}^{Z_n}V_{n,j}\cdot\sqrt{W_n}.
\end{equation}

\begin{lem}\label{CRASL5.1}
Let $(r_n)\subset\mathbb{N}$ be a sequence of positive integers such that $r_n\rightarrow\infty$ as $n\rightarrow\infty$. Assume that  $\delta_\infty(\xi)\in(0,\infty)$ a.s. and set $F_n(r,x)=\mathbb{P}_\xi\left(\frac{1}{\sqrt{r}}\sum_{j=1}^{r}V_{n,j}\leq x\right)$. Then
$$\sup_{x\in\mathbb{R}}|F_n(r_n,x)-\Phi(x)|\rightarrow0\quad\text{in $L^1$,}$$
where $\Phi(x)=\frac{1}{\sqrt{2\pi}}\int_{-\infty}^x e^{-t^2/2}dt$ is the distribution function of standard normal distribution $\mathcal{N}(0,1)$.
\end{lem}
\begin{proof}
By the stationarity of the environment sequence, we have
\begin{eqnarray*}
 \mathbb{E}\sup_{x\in\mathbb{R}}|F_n(r_n,x)-\Phi(x)|
&=&\mathbb{E}\sup_{x\in\mathbb{R}}|\mathbb{P}_\xi( \sum_{j=1}^{r_n}V_{n,j}\leq \sqrt{r_n}x)-\Phi(x)|\\
&=&\mathbb{E}\sup_{x\in\mathbb{R}}|\mathbb{P}_\xi( \sum_{j=1}^{r_n}V_{1,j}\leq \sqrt{r_n}x)-\Phi(x)|.
\end{eqnarray*}
Notice that $(V_{1,j})_{j\geq1}$ are i.i.d. under $\mathbb{P_\xi}$. By the classic central limit theorem,
$$\sup_{x\in\mathbb{R}}|\mathbb{P}_\xi( \sum_{j=1}^{r_n}V_{1,j}\leq \sqrt{r_n}x)-\Phi(x)|\rightarrow0\quad a.s..$$
So the dominated convergence theorem ensures that
$$\mathbb{E}\sup_{x\in\mathbb{R}}|\mathbb{P}_\xi( \sum_{j=1}^{r_n}V_{1,j}\leq \sqrt{r_n}x)-\Phi(x)|\rightarrow0.$$
Therefore
$$\mathbb{E}\sup_{x\in\mathbb{R}}|\mathbb{P}_\xi(Y_n\leq x)-\Phi(x)|\rightarrow0.$$

\end{proof}

\begin{proof}[Proof of Theorem \ref{CRASTP}]
Let $U_n=\frac{\sqrt{\Pi_n}(W-W_n)}{\delta_\infty(T^n\xi)}$. Notice that  (\ref{CRASEP1})  implies (\ref{CRASEP2}) since
\begin{eqnarray*}
\sup_{x\in\mathbb{R}}|\mathbb{P}(U_n\leq x)-\Phi_2(x)|&=&\sup_{x\in\mathbb{R}}|\mathbb{E}[\mathbb{P}_\xi(U_n\leq x)]-\mathbb{E}\Phi_1(x)|\\
&\leq&\mathbb{E}\sup_{x\in\mathbb{R}}|\mathbb{P}_\xi(U_n\leq x)-\Phi_1(x)|.
\end{eqnarray*}
So we only need  to prove (\ref{CRASEP1}). Since $\mathbb{E}\log\mathbb{E}_\xi\left(\frac{Z_1}{m_0}\right)^2<\infty$, we have the a.s. convergence of the series $\sum_{n=0}^\infty\frac{1}{\Pi_n}\left(\frac{m_n(2)}{m_n^2}-1\right)$ by Lemma \ref{CRL2.2.1}. According to the statements before Theorem \ref{CRASTP}, the martingale $\{W_n\}$ is $L^2$-bounded under $\mathbb{P}_\xi$ for almost all $\xi$, so that $W_n\rightarrow W$ in $L^2$ under $\mathbb{P}_\xi$. Therefore, $\mathbb E_\xi W=1$ a.s., which implies that $\mathbb{P}_\xi(W=0)<1$ a.s.. Hence the extinction probability $\mathbb{P}_\xi(Z_n\rightarrow0)\leq \mathbb{P}_\xi(W=0)<1$ a.s., since the set $\{Z_n\rightarrow0\}$ is a.s. contained in $\{W=0\}$. Notice that $\mathbb{P}_\xi(W=0)$ is a solution of the functional equation
$$q(\xi)=\varphi_{\xi_0}(q(T\xi)).$$
By the uniqueness of the solution (Theorem 6 of \cite{a12}),
\begin{equation}\label{CRE+2.1}
\mathbb{P}_\xi(W=0)=\mathbb{P}_\xi(Z_n\rightarrow0)\qquad a.s..
\end{equation}
Now
$$\mathbb{P}_\xi(U_n\leq x)=\mathbb{P}_\xi(U_n\leq x, Z_n\rightarrow\infty)+\mathbb{P}_\xi(U_n\leq x, Z_n\rightarrow0).$$
Notice that $U_n\rightarrow0$ a.s. on $\{Z_n\rightarrow0\}$. Thus
\begin{equation}\label{CRE+3}
\mathbb{P}_\xi(U_n\leq x, Z_n\rightarrow0)\rightarrow\mathbf{1}_{\{0\leq x\}}\mathbb{P}_\xi(Z_n\rightarrow0)=\mathbf{1}_{\{0\leq x\}}\mathbb{P}_\xi(W=0)\qquad a.s..
\end{equation}
On the other side, by (\ref{CRASE5.3}),
$$
\mathbb{P}_\xi(U_n\leq x, Z_n\rightarrow\infty) =\mathbb{P}_\xi\left(\frac{1}{\sqrt{Z_n}}\sum_{j=1}^{Z_n}V_{n,j}\cdot\sqrt{W_n}\leq x, Z_n\rightarrow\infty\right).
$$
As $Z_n$ is independent of $(V_{n,j})_{j\geq1}$ under $\mathbb{P}_\xi$, it follows that
\begin{equation}\label{CRASE5.5}
 \mathbb{P}_\xi(U_n\leq x, Z_n\rightarrow\infty)=\mathbb{E}_\xi F_n(Z_n, x/\sqrt{W_n})\mathbf{1}_{\{Z_n\rightarrow\infty\}}.
  \end{equation}
   By Lemma \ref{CRASL5.1}, for each sequence $(n')$ of $\mathbb{N}$ with $n'\rightarrow\infty$, there exists a subsequence $(n'')$ of $(n')$ such that $n''\rightarrow\infty$ and
 \begin{equation}\label{CRE+2.2}
  \sup_{x\in\mathbb{R}}|F_{n''}(r_{n''},x)-\Phi(x)|\rightarrow0, \quad\text{whenever $r_n\rightarrow\infty$.}
\end{equation}
  Take $r_n=Z_n$. By (\ref{CRASE5.5}), (\ref{CRE+2.1}), (\ref{CRE+2.2}) and the dominated convergence theorem, we see that for each $x\in\mathbb{R}$, a.s.,
  \begin{eqnarray*}
  &&\left| \mathbb{P}_\xi(U_{n''}\leq x, Z_n\rightarrow\infty)-\mathbb{E}_\xi\Phi(x/\sqrt{W})\mathbf{1}_{\{W>0\}}\right|\\
  &=&\left|\mathbb{E}_\xi\left(F_{n''}(Z_{n''}, x/\sqrt{W_{n''}})-\Phi(x/\sqrt{W_{n''}})\right)\mathbf{1}_{\{Z_n\rightarrow\infty\}}+
  \mathbb{E}_\xi\Phi(x/\sqrt{W_{n''}})\left(\mathbf{1}_{\{Z_n\rightarrow\infty\}}-\mathbf{1}_{\{W>0\}}\right)\right.\\
  &&\left.+\mathbb{E}_\xi\left(\Phi(x/\sqrt{W_{n''}})-\Phi(x/\sqrt{W})\right)\mathbf{1}_{\{W>0\}}\right|\\
  &\leq&\mathbb{E}_\xi\sup_{x\in\mathbb{R}}|F_{n''}(Z_{n''}, x)-\Phi(x)|\mathbf{1}_{\{Z_n\rightarrow\infty\}}+\mathbb{E}_\xi\left|\Phi(x/\sqrt{W_{n''}})-\Phi(x/\sqrt{W})\right|\mathbf{1}_{\{W>0\}}\rightarrow0.
  \end{eqnarray*}
Notice that $\Phi_1(x)=\mathbb{E}_\xi\Phi({x}/{\sqrt{W}})\mathbf{1}_{\{W>0\}}+\mathbb{P}_\xi(W=0)\mathbf{1}_{\{x\geq0\}}$.
Therefore, we have for each $x\in\mathbb{R}$, a.s.,
 \begin{equation}\label{CRASE5.6}
  \mathbb{P}_\xi(U_{{n''}}\leq x)\rightarrow \Phi_1(x),
 \end{equation}
 where the null set may depend on $x$.
 Thus a.s. (\ref{CRASE5.6}) holds for all rational $x$, and then for all $x\in\mathbb{R}$ by the monotonicity of the left term and the continuity of the right term, where the null set is independent of $x$. Hence by Dini's theorem,
  $$\sup_{x\in\mathbb{R}}|\mathbb{P}_\xi(U_{n''}\leq x)- \Phi_1(x)|\rightarrow0\qquad a.s..$$
By the dominated convergence theorem,
\begin{equation}\label{CRASE5.7}
  \mathbb{E}\sup_{x\in\mathbb{R}}|\mathbb{P}_\xi(U_{n''}\leq x)- \Phi_1(x)|\rightarrow0.
 \end{equation}
 So we have proved that for each subsequence $(n')$ of $\mathbb{N}$ with $n'\rightarrow\infty$, there is a subsequence $(n'')$ of $(n')$ with $n''\rightarrow\infty$ such that (\ref{CRASE5.7}) holds. Thus (\ref{CRASEP1}) holds.
\end{proof}

\section{ Proof of Theorem \ref{CRT2.1.9}}\label{CR1S4}
In this section we consider a branching process $(Z_n)$ in a finite state random environment, where each
$\xi_n$ takes values in a finite set $\{a_1,a_2,\cdots,a_N\}$.  Before the proof of
Theorem \ref{CRT2.1.9}, we first prove  three lemmas.
\\*

The first lemma is an elementary result about the Laplace transform.

\begin{lem}\label{CRL2.5.1}
Let $X$ be a random variable with  $\mathbb{E}X=0$. If $\mathbb{E}e^{\delta|X|}\leq
K$  for some constants $\delta>0$ and $K>0$, then $\mathbb{E}e^{tX}\leq e^{K_{\delta}t^2}$ for
$t\in (0,\frac{\delta}{2})$, where $K_{\delta}=\frac{2K}{\delta^2}$.
\end{lem}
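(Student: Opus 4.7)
The plan is to expand $e^{tX}$ as a power series, use $\mathbb{E}X=0$ to kill the linear term, and control the remaining tail by extracting moment bounds $\mathbb{E}|X|^k\le K\,k!/\delta^k$ from the hypothesis $\mathbb{E}e^{\delta|X|}\le K$.

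First, from the Taylor expansion $e^{\delta|X|}=\sum_{k\ge 0}(\delta|X|)^k/k!$ and monotone convergence, each term must satisfy $\delta^k\mathbb{E}|X|^k/k!\le K$, whence $\mathbb{E}|X|^k\le K\,k!/\delta^k$ for every $k\ge 0$. This makes the series $\sum_k|t|^k\mathbb{E}|X|^k/k!$ absolutely convergent whenever $|t|<\delta$, so by Fubini/dominated convergence one is allowed to interchange sum and expectation:
\[
\mathbb{E}e^{tX}=\sum_{k=0}^{\infty}\frac{t^{k}\mathbb{E}X^{k}}{k!}=1+\sum_{k=2}^{\infty}\frac{t^{k}\mathbb{E}X^{k}}{k!},
\]
where the last equality uses $\mathbb{E}X=0$.

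Next I would bound $|\mathbb{E}X^{k}|\le\mathbb{E}|X|^{k}\le K\,k!/\delta^{k}$ and sum a geometric series: for $|t|<\delta$,
\[
\mathbb{E}e^{tX}\le 1+K\sum_{k=2}^{\infty}\Bigl(\frac{|t|}{\delta}\Bigr)^{k}=1+K\,\frac{(|t|/\delta)^{2}}{1-|t|/\delta}.
\]
Restricting now to $|t|<\delta/2$ forces $1-|t|/\delta>1/2$, so
\[
\mathbb{E}e^{tX}\le 1+\frac{2K}{\delta^{2}}\,t^{2}=1+K_{\delta}t^{2}\le e^{K_{\delta}t^{2}},
\]
the last step by the elementary inequality $1+x\le e^{x}$. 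This is the claimed bound.

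There is no substantive obstacle here; the only small point of care is justifying the termwise expansion of $\mathbb{E}e^{tX}$, which is handled by the absolute convergence coming from the moment estimates $\mathbb{E}|X|^{k}\le K\,k!/\delta^{k}$. I also note that although the statement is written for $t<\delta/2$, the argument uses only $|t|$, so in fact the bound holds for all real $t$ with $|t|<\delta/2$.
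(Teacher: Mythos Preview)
Your proof is correct and follows essentially the same route as the paper's: expand $e^{tX}$ term by term, drop the linear term using $\mathbb{E}X=0$, bound $\mathbb{E}|X|^k/k!$ by $K/\delta^k$ via the exponential-moment hypothesis, sum the resulting geometric series, and finish with $1+x\le e^x$. Your version is in fact slightly more careful than the paper's in justifying the interchange of sum and expectation and in noting that only $|t|<\delta/2$ is used.
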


\begin{proof}[\textbf{Proof}]
For $t\in(0,\frac{\delta}{2})$,
\begin{eqnarray*}
\mathbb{E}e^{tX}&=&\mathbb{E}\sum_{k=0}^\infty\frac{(tX)^k}{k!}=1+\sum_{k=2}^\infty(\frac{t}{\delta})^k\frac{\mathbb{E}(\delta
X)^k}{k!} \\&\leq&1+\sum_{k=2}^\infty(\frac{t}{\delta})^k\mathbb{E}e^{\delta
|X|}\leq1+K\frac{(t/\delta)^2}{1-t/\delta}\leq e^{K_{\delta}t^2}.
\end{eqnarray*}

\end{proof}

The second lemma is a generalization of a result of Athreya (1974, \cite{ath}, Theorem 4)  on the Galton-Watson process about the exponential moments of $W_n$; see also Liu (1996, \cite{liu96}, Theorem 2.1) for a slightly more complete result.

\begin{lem}\label{CRL2.5.2}
Let $(Z_n)$ be a branching process in a finite state random environment with
 $m_0>1$ a.s.. If  $\mathbb{E}_{\xi}e^{\theta_0W_1}\leq K\;a.s. $
for some constants $\theta_0>0$ and $K>0$,  then there
exist constants $\theta_1>0$ and $C_1>0$ such that
$$\sup_n\mathbb{E}_{\xi}e^{\theta_1W_n}\leq C_1\qquad a.s.. $$
\end{lem}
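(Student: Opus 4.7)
The plan is to bootstrap the exponential moment bound for $W_1$ up to $W_n$ uniformly in $n$ and $\xi$ by controlling the centered log-Laplace transform $H_n(\xi, t) := \log \mathbb{E}_\xi e^{t(W_n - 1)}$. Since $(W_n)$ is a nonnegative $\mathbb{P}_\xi$-martingale with $W_0 = 1$, we have $\mathbb{E}_\xi W_n = 1$, so $W_n - 1$ is centered and Lemma \ref{CRL2.5.1} applies. The standard branching decomposition under $\mathbb{P}_\xi$ yields
\begin{equation*}
W_{n+1} - 1 = (W_1 - 1) + \frac{1}{m_0}\sum_{j=1}^{Z_1}(W_n^{(j)} - 1),
\end{equation*}
where, conditional on $\mathcal{F}_1$, the $W_n^{(j)}$ are i.i.d.\ with the law of $W_n$ under $\mathbb{P}_{T\xi}$. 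Conditioning first on $\mathcal{F}_1$ and using $Z_1 = m_0 W_1$, I obtain the functional recursion
\begin{equation*}
H_{n+1}(\xi, t) = m_0\, H_n(T\xi, t/m_0) + H_1(\xi, s), \qquad s := t + m_0\, H_n(T\xi, t/m_0).
\end{equation*}

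For the base case, the inequality $|W_1 - 1| \leq W_1 + 1$ together with the hypothesis gives $\mathbb{E}_\xi e^{\theta_0 |W_1 - 1|} \leq K e^{\theta_0}$ a.s. Then Lemma \ref{CRL2.5.1} supplies a deterministic constant $c > 0$ such that $H_1(\xi, t) \leq c\, t^2$ uniformly in $\xi$ (a.s.) for $|t| < \theta_0/2$. The finite state assumption is essential here: it provides the a.s.\ uniform upper bound $K$ on $\mathbb{E}_\xi e^{\theta_0 W_1}$.

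I then prove by induction on $n$ that there exist $A > 0$ and $\theta_1 \in (0, \theta_0/4]$ with $H_n(\xi, t) \leq A\, t^2$ a.s.\ uniformly in $\xi$ for all $|t| \leq \theta_1$. In the inductive step, since $m_0 \geq m_{\min} := \mathrm{ess\,inf}\, m_0 > 1$, the hypothesis forces $m_0 H_n(T\xi, t/m_0) \leq A t^2/m_{\min}$, so $|s| \leq |t| + A t^2/m_{\min}$; for $\theta_1$ chosen small enough (for instance, $A \theta_1/m_{\min} \leq 1$ and $\theta_1 \leq \theta_0/4$), one has $|s| < \theta_0/2$, so the base case gives $H_1(\xi, s) \leq c s^2 \leq 4 c t^2$. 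Plugging into the recursion,
\begin{equation*}
H_{n+1}(\xi, t) \leq \Bigl(\frac{A}{m_{\min}} + 4c\Bigr) t^2,
\end{equation*}
and the induction closes as soon as $A \geq 4c\, m_{\min}/(m_{\min} - 1)$, a choice enabled precisely by the finite state hypothesis. With such $A$ and $\theta_1$,
\begin{equation*}
\mathbb{E}_\xi e^{\theta_1 W_n} = e^{\theta_1 + H_n(\xi, \theta_1)} \leq e^{\theta_1 + A \theta_1^2} =: C_1
\end{equation*}
uniformly in $n$, as required.

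The principal technical hurdle is calibrating $A$ and $\theta_1$ so that the induction actually closes: one must simultaneously keep $|s|$ inside the region $(-\theta_0/2,\theta_0/2)$ where the base-case quadratic bound is valid and secure a contraction factor $1/m_{\min} < 1$ in front of $A$. Both rely crucially on the finite state hypothesis, which delivers a deterministic base constant $c$ (through the uniform bound $K$) and a positive spectral gap $m_{\min} > 1$.
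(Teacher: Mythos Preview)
Your argument is correct and follows essentially the same route as the paper: both rewrite the branching recursion for the Laplace transform (the paper writes $\psi_{n+1}(t,\xi)=\varphi_{\xi_0}(\psi_n(t/m_0,T\xi))$, you pass to the centered log-Laplace $H_n$) and prove by induction that $\psi_n(t,\xi)\le e^{t+At^2}$ for small $t$, using Lemma~\ref{CRL2.5.1} for the base case. The only real difference is in how the inductive step is closed: the paper shows $g_{\xi_0}(t)=\varphi_{\xi_0}(e^{t/m_0+K_1t^2/m_0^2})e^{-t-K_1t^2}\le 1$ via a second-derivative computation at $t=0$ and then invokes the finite state space to make the resulting interval uniform in $\xi_0$, whereas you close the induction by explicit algebra, exploiting the contraction factor $1/m_{\min}<1$ (which is where the finite state hypothesis enters for you). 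One small remark: the uniform constant $K$ is already part of the hypothesis of the lemma, so the finite state assumption is not needed to obtain the deterministic $c$---its sole role in your proof is to guarantee $m_{\min}=\mathrm{ess\,inf}\,m_0>1$.
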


\begin{proof}
Assume that $\xi_0$ take values in a finite set $\{a_1, a_2, \cdots, a_N\}$.
Denote
$$\varphi_{\xi_n}(s)=\sum_{i=0}^\infty p_i(\xi_n)s^i,\;s\in[0,1]$$
the probability generating distribution of $p(\xi_n)$.
Let $\psi_n(t,\xi)=\mathbb{E}_{\xi}e^{tW_n}$, we have
\begin{equation}\label{CRE2.5.1}
\psi_{n+1}(t,\xi)=\varphi_{\xi_0}(\psi_n( \frac{t}{m_0},T\xi)).
\end{equation}
Noticing that $\mathbb{E}_{\xi}(W_1-1)=0$ and $\mathbb{E}_{\xi}e^{\theta_0|W_1-1|}\leq
Ke^{\theta_0}\;a.s. $, by Lemma \ref{CRL2.5.1}, we have for
$t\in(0,\frac{\theta_0}{2})$,
$$\psi_1(t,\xi)=\mathbb{E}_{\xi}e^{tW_1}=e^t\mathbb{E}_{\xi}e^{t(W_1-1)}\leq e^{t+K_1t^2}\qquad a.s., $$
where $K_1>\frac{2Ke^{\theta_0}}{\theta_0^2}$ is a suitable large constant.

Let
$g_{\xi_0}(t)=\varphi_{\xi_0}(e^{t/m_0+K_1t^2/m_0^2})e^{-t-K_1t^2}$.
It is not difficult to see that $g'_{\xi_0}(0)=0$ and
$g''_{\xi_0}(0)={\varphi_{\xi_0}''(1)}/{m_0^2}+(2K_1+1)( {1}/{m_0}-1)<0$ for $K_1$ large enough since $m_0>1$. Hence there exists
$t_0(\xi_0)>0$ such that $g_{\xi_0}(t)$ is decreasing on
$[0,t_0(\xi_0)]$. Notice that $\xi_0$ take value in
$\{a_1,a_2,\cdots,a_N\}$. We therefore can take $t_0=\min_{1\leq i\leq
N}\{t_0(a_i)\}>0$ such that $g_{\xi_0}(t)$ is decreasing on $[0,t_0]$,
thus $g_{\xi_0}(t)\leq g_{\xi_0}(0)=1$ on $[0,t_0]$.

Take $\theta_1=\min\{\frac{\theta_0}{2}, t_0\}$. From (\ref{CRE2.5.1}) and
considering the fact that $g_{\xi_0}(t)\leq1$ on $[0,\theta_1]$, by
induction we can get
$$\psi_n(t,\xi)\leq e^{t+K_1t^2}\qquad(\forall t\in(0, \theta_1))\qquad a.s..$$
Therefore the conclusion of the lemma follows with $C_1=e^{\theta_1+K_1\theta_1^2}$.

\end{proof}

The third lemma is a more general result than Theorem \ref{CRT2.1.9} about the supergeometric convergence rate of $\mathbb{P}_\xi(|W-W_n|>\varepsilon)$, where we do not assume that the environment has finite state space.

\begin{lem}\label{CRL2.5.3}
Suppose that $p_0(\xi_0)=0 \;a.s.$. If $\sup_n\mathbb{E}_{\xi}e^{\theta_1W_n}\leq
C_1\;a.s. $ for some constants $\theta_1>0$ and  $C_1>0$, then there exist
constants $\gamma>0$ and $C>0$ such that
$$\mathbb{P}_{\xi}(|W-W_n|>\varepsilon)\leq C\exp({-\gamma \Pi_n^{1/3}\varepsilon^{2/3}})\qquad a.s..$$
\end{lem}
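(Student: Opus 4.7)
My plan is an exponential Chebyshev (Chernoff) argument on the conditionally i.i.d.\ decomposition of $W - W_n$ into the shifted martingale limits, combined with the exponential tail of $W_n$ furnished by Lemma~\ref{CRL2.5.2}.

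First I would invoke the identity
\[W - W_n = \frac{1}{P_n}\sum_{j=1}^{Z_n}\bigl(W(n,j) - 1\bigr),\]
where, conditionally on $\mathcal{F}_n$ under $\mathbb{P}_\xi$, the $W(n,j)$ are i.i.d.\ with law $\mathbb{P}_{T^n\xi}(W\in\cdot)$. Fatou's lemma applied to $W_k \to W$ together with the hypothesis gives the \emph{uniform} (deterministic-constant) bound $\mathbb{E}_{T^n\xi} e^{\theta_1 W} \leq C_1$ a.s.; since this exponential moment forces uniform integrability of $W_k$ under $\mathbb{P}_{T^n\xi}$, we also have $\mathbb{E}_{T^n\xi} W = 1$. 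Applying Lemma~\ref{CRL2.5.1} to $W-1$ under $\mathbb{P}_{T^n\xi}$ (with $\delta = \theta_1$ and the crude bound $\mathbb{E}_{T^n\xi} e^{\theta_1|W-1|} \leq C_1 + e^{\theta_1}$) produces a deterministic $K_2 > 0$ with
\[\mathbb{E}_{T^n\xi} e^{s(W-1)} \leq e^{K_2 s^2}, \qquad s \in (0, \theta_1/2),\]
and similarly for $1 - W$ in place of $W - 1$.

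Exponential Markov conditional on $\mathcal{F}_n$ then yields, for every $t \in (0, \theta_1/2)$,
\[\mathbb{P}_\xi\bigl(|W-W_n| > \varepsilon \bigm| \mathcal{F}_n\bigr) \leq 2 \exp\bigl(-t P_n \varepsilon + K_2 t^2 Z_n\bigr).\]
Because $Z_n = P_n W_n$ is random a straightforward optimization in $t$ is not available, so I would truncate at $\{W_n \leq M\}$ for a parameter $M > 0$ to be chosen. On this event $Z_n \leq M P_n$, and the admissible choice $t = \varepsilon/(2K_2 M)$ (valid as soon as $M \geq \varepsilon/(K_2 \theta_1)$) gives
\[\mathbb{P}_\xi\bigl(|W-W_n| > \varepsilon,\ W_n \leq M\bigr) \leq 2 \exp\!\left(-\frac{P_n \varepsilon^2}{4 K_2 M}\right),\]
while on the complement Lemma~\ref{CRL2.5.2} directly yields $\mathbb{P}_\xi(W_n > M) \leq C_1 e^{-\theta_1 M}$.

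Setting $M := P_n^{1/3} \varepsilon^{2/3}$ then aligns the tail contribution with the announced rate: the second term becomes exactly $C_1 e^{-\theta_1 P_n^{1/3}\varepsilon^{2/3}}$, while the first becomes $2 e^{-P_n^{2/3}\varepsilon^{4/3}/(4K_2)}$, which is no worse than the target once $P_n^{1/3}\varepsilon^{2/3}$ is bounded away from $0$; the bounded regime (including the range where the admissibility constraint $t < \theta_1/2$ fails) is absorbed into the constant $C$ via $\mathbb{P}_\xi(\cdot) \leq 1$. The main obstacle is neither computational nor conceptual but logistical: to extract a genuinely a.s.\ bound one must check that every constant invoked above ($K_2$, $\theta_1$, $C_1$, and the critical level $M$) is independent of $\xi$. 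This uniformity is precisely what Lemma~\ref{CRL2.5.2} delivers, and it is the reason the finite-state assumption on the environment enters essentially: without uniform constants the truncation level would itself have to depend on $\xi$, and one would lose control at the balancing step.
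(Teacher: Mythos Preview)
Your truncation-and-balance argument is a legitimate alternative to the paper's route. The paper instead inserts a \emph{random} exponent $\theta/\sqrt{Z_n}$ into the Chernoff bound, so that the conditional moment generating function $f_\xi(\theta,k)=\bigl(\phi_{T^n\xi}(\theta/\sqrt k)\,e^{-\theta/\sqrt k}\bigr)^k$ stays bounded uniformly in $k$; what remains is then $\mathbb{E}_\xi e^{-\lambda/\sqrt{W_n}}$, which the paper controls by the integral estimate $\int_0^\infty e^{-t-\theta_1\lambda^2/t^2}\,dt\le 2\,e^{-\theta_1^{1/3}\lambda^{2/3}}$. Your approach avoids that Laplace-type computation at the cost of an explicit truncation level; both lead to the same exponent $P_n^{1/3}\varepsilon^{2/3}$.

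There is, however, a genuine slip in your final step. The regime where the admissibility constraint $t=\varepsilon/(2K_2M)<\theta_1/2$ fails is \emph{not} the regime where $P_n^{1/3}\varepsilon^{2/3}$ is bounded. With $M=P_n^{1/3}\varepsilon^{2/3}$ the constraint fails exactly when $\varepsilon^{1/3}\ge K_2\theta_1\,P_n^{1/3}$, i.e.\ $\varepsilon\ge (K_2\theta_1)^3P_n$; and then $P_n^{1/3}\varepsilon^{2/3}\ge (K_2\theta_1)^2 P_n$, which is unbounded as $n\to\infty$. So the trivial bound $\mathbb{P}_\xi(\cdot)\le 1$ does not cover this range. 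The fix is short: in that regime use $|W-W_n|\le W+W_n$ and the hypothesis to get $\mathbb{P}_\xi(|W-W_n|>\varepsilon)\le 2C_1e^{-\theta_1\varepsilon/2}$, and then observe that $\varepsilon\ge K_2\theta_1\,P_n^{1/3}\varepsilon^{2/3}$ there, which yields the target bound with $\lambda\le K_2\theta_1^2/2$. Finally, note that the uniform constants $\theta_1,C_1$ are already \emph{hypotheses} of this lemma; you need not (and should not) invoke Lemma~\ref{CRL2.5.2} or the finite-state assumption inside the proof of Lemma~\ref{CRL2.5.3}.
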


\begin{proof}
We adopt the method in Athreya (1994, \cite{ath}) to prove Lemma \ref{CRL2.5.3}.
Recall  that
\begin{equation}\label{CRE2.5.2}
W-W_n=\frac{1}{\Pi_n}\sum_{j=1}^{Z_n}(W(n,j)-1).
\end{equation}
 Let $\phi_{\xi}(\theta)=\mathbb{E}_{\xi}e^{\theta W}$. It is clear that
$\phi_{\xi}(\theta)<\infty$ for $0<\theta\leq\theta_1$.
Denote
$$f_{\xi}(\theta,k)=\mathbb{E}_{\xi}\exp\left(\frac{\theta}{\sqrt{k}}\sum_{j=1}^k(W(n,j)-1)\right)\qquad(k\in\mathbb N).$$
For $0<\theta\leq\min\{\theta_1,1\}$, we have
\begin{eqnarray*}
f_{\xi}(\theta,k)=\left(\phi_{T^n\xi}(\frac{\theta}{\sqrt{k}})e^{-\theta/\sqrt{k}}\right)^k&=&
\left(1+\frac{1}{k}\frac{\phi_{T^n\xi}(\frac{\theta}{\sqrt{k}})e^{-\theta/\sqrt{k}}-1}{\theta^2/k}\theta^2\right)^
k\\&\leq&\exp\left(\frac{|\phi_{T^n\xi}(\frac{\theta}{\sqrt{k}})e^{-\theta/\sqrt{k}}-1|}{\theta^2/k}\right).
\end{eqnarray*}
Since for $t\in(0,\frac{\theta_1}{2})$,
\begin{eqnarray*} \frac{|\phi_{T^n\xi}(t)e^{-t}-1|}{t^2}&=&\left|\mathbb{E}_{T^n\xi}\sum_{k=2}^\infty
t^{k-2}\frac{(W-1)^k}{k!}\right|\\
&\leq&\sum_{k=2}^\infty(\frac{t}{\theta_1})^{k-2}\theta_1^{-2}\mathbb{E}_{T^n\xi}e^{\theta_1|W-1|}\\
&\leq&\sum_{k=2}^\infty(\frac{t}{\theta_1})^{k-2}\theta_1^{-2}e^{\theta_1}\mathbb{E}_{T^n\xi}e^{\theta_1W}\\
&\leq&\frac{C_1\theta_1^{-2}e^{\theta_1}}{1-t/{\theta_1}}<2C_1\theta_1^{-2}e^{\theta_1},
\end{eqnarray*}
it follows that for $0<\theta<\min\{\frac{\theta_1}{2},1\}$,
\begin{equation}\label{CRE+2}
f_{\xi}(\theta,k)\leq
\exp{( 2C_1\theta_1^{-2}e^{\theta_1})}=:C_2\qquad( \forall k).
\end{equation}
By (\ref{CRE2.5.2}) and (\ref{CRE+2}), we have for $0<\theta<\min\{\frac{\theta_1}{2},1\}$,
\begin{eqnarray*}
\mathbb{P}_{\xi}(W-W_n>\varepsilon)&=&\mathbb{P}_\xi\left(\frac{1}{\sqrt{Z_n}}\sum_{j=1}^{Z_n}(W_{n,j}-1)>\frac{\sqrt{\Pi_n}\varepsilon}{\sqrt{W_n}}\right)\\
&\leq&\mathbb{E}_{\xi}\frac{\mathbb{E}_{\xi}\left(\left.\exp(\frac{\theta}{\sqrt{Z_n}}\sum_{j=1}^{Z_n}(W(n,j)-1))\right|\mathcal
{F}_n\right)}{\exp({\theta \sqrt{\Pi_n}\varepsilon}/{\sqrt{W_n}})}\\
&=&\mathbb{E}_{\xi}f_{\xi}(\theta,Z_n)\exp \left(-\frac {\theta
\sqrt{\Pi_n}\varepsilon}{\sqrt{W_n}} \right)\\&\leq& C_2\mathbb{E}_{\xi}\exp \left(-\frac
{\theta \sqrt{\Pi_n}\varepsilon}{\sqrt{W_n}} \right).
\end{eqnarray*}
Using the identity
$ \mathbb{E}_{\xi} e^{-\lambda X} = \lambda \int_0^\infty e^{-\lambda u}\mathbb{P}_{\xi}( X\leq u) du$ for
$ X= 1/ \sqrt{W_n}$ and $\lambda>0$,   and the exponential Markov inequality $\mathbb{P}_{\xi}(W_n>1/u^2) \leq  \mathbb{E}_{\xi} (e^{\theta_1W_n})  e^{-\theta_1/u^2}$ ($u\neq 0$),
we  see that for every $\lambda>0$,
\begin{eqnarray*}
B_n(\lambda):= \mathbb{E}_{\xi}\exp(-\frac{\lambda}{\sqrt{W_n}})
&=&\lambda\int_0^\infty
e^{-\lambda u}\mathbb{P}_{\xi}(W_n>1/u^2)du\\
&\leq&\mathbb{E}_{\xi}e^{\theta_1W_n}\int_0^\infty
e^{-t-\theta_1\lambda^2/t^2}dt\\&\leq& C_1\int_0^\infty
e^{-t-\theta_1\lambda^2/t^2}dt\\
&=&C_1\left(\int_0^{\theta_1^{1/3}\lambda^{2/3}}e^{-t-\theta_1\lambda^2/t^2}dt+\int_{\theta_1^{1/3}\lambda^{2/3}}^\infty e^{-t-\theta_1\lambda^2/t^2}dt\right)\\
&\leq&C_1\left(e^{-\theta_1^{1/3}\lambda^{2/3}}\int_0^{\theta_1^{1/3}\lambda^{2/3}}e^{-t}dt+\int_{\theta_1^{1/3}\lambda^{2/3}}^\infty e^{-t}dt\right)\\
&=&C_1\left[e^{-\theta_1^{1/3}\lambda^{2/3}}(1-e^{-\theta_1^{1/3}\lambda^{2/3}})+e^{-\theta_1^{1/3}\lambda^{2/3}}\right]\\
&\leq&2C_1\exp(-\theta_1^{1/3}\lambda^{2/3}).
\end{eqnarray*}
Therefore, for $0<\theta_2<\min\{\frac{\theta_1}{2},1\}$
\begin{eqnarray*}
\mathbb{P}_{\xi}(W-W_n>\varepsilon)
&\leq&C_2B_n(\theta_2\sqrt{\Pi_n}\varepsilon)\\
&\leq&2C_1C_2\exp(-\theta_1^{1/3}\theta_2^{2/3}\Pi_n^{1/3}\varepsilon^{2/3})\\
&=&C\exp({-\gamma \Pi_n^{1/3}\varepsilon^{2/3}}),
\end{eqnarray*}
where $C=2C_1C_2>0$, $\gamma=\theta_1^{1/3}\theta_2^{2/3}>0$. For
$\mathbb{P}_{\xi}(W_n-W>\varepsilon)$, the argument is similar.

\end{proof}

\begin{proof}[Proof of Theorem \ref{CRT2.1.9}]
Notice that $\mathbb{E}_{\xi}e^{\theta_0W_1}$ depends only on $\xi_0$. Therefore, when $\xi_0$ has a finite state space, the following three conditions are equivalent:
$$  \mathbb{E}_{\xi}e^{\theta_0W_1}<\infty\;a.s.,\qquad   \mbox{ess sup}\ \mathbb{E}_{\xi}e^{\theta_0W_1}<\infty,\qquad   \mathbb{E}e^{\theta_0W_1}<\infty.$$
Moreover, notice that $m_0>1$ a.s. since $p_0(\xi_0)=0$ and $p_1(\xi_0)<1$ a.s.. By Lemma \ref{CRL2.5.2}, there exist
constants $\theta_1>0$ and $C_1>0$ such that
$\sup_n\mathbb{E}_{\xi}e^{\theta_1W_n}\leq C_1\; a.s..$
Hence part (a) is a direct consequence of Lemma  \ref{CRL2.5.2} and the equality (\ref{moment}) (with $\theta=\theta_1$).

For part (b),  by Lemmas  \ref{CRL2.5.2} and
 \ref{CRL2.5.3}, we see that  (\ref{CRE2.1.3}) holds. Taking the expectation in (\ref{CRE2.1.3}) and noticing the fact that $\Pi_n\geq \underline{m}^n$, we immediately
get (\ref{CRE2.1.4}).

\end{proof}

\bigskip
\noindent
{\bf Acknowledgements.}

\bigskip
The work has been partially supported by the National Natural Science Foundation of China (Grant No. 11101039 and
Grant No. 11171044),  and Hunan Provincial Natural Science Foundation of China (Grant No.11JJ2001). The authors are grateful to an anonymous referee for helpful remarks and comments.

\end{document}